\theoremstyle{plain}
\newtheorem{theorem}{Theorem}[section]
\newtheorem{proposition}[theorem]{Proposition}
\newtheorem{lemma}[theorem]{Lemma}
\theoremstyle{definition}
\newtheorem{example}[theorem]{Example}
\theoremstyle{remark}
\def\pr{{\rm Pr}}
\newcommand{\eid}{\stackrel{d}{=}}
\newcommand\blfootnote[1]{%
  \begingroup
  \renewcommand\thefootnote{}\footnote{#1}%
  \addtocounter{footnote}{-1}%
  \endgroup
}
\begin{document}


\thispagestyle{empty}
\begin{center}
{\LARGE \textbf{Discrete Extremes}\\}
\phantom{adsf}
{\large Adrien Hitz, Richard Davis and Gennady Samorodnitsky\\}
\end{center}

\blfootnote{Adrien Hitz, University of Oxford, 24-29 St Giles, Oxford OX1 3LB, UK.} 
\blfootnote{Gennady Samorodnitsky, Cornell University, 220 Rhodes Hall, Ithaca, NY.} 
\blfootnote{Richard Davis, Columbia University, 1255 Amsterdam Avenue, New York, NY.}


\small
\setlength{\leftskip}{1cm}
\setlength{\rightskip}{1cm}

Our contribution is to widen the scope of extreme value analysis applied to discrete-valued data. Extreme values of a random variable $X$ are commonly modeled using the generalized Pareto distribution, a method that often gives good results in practice. When $X$ is discrete, we propose two other methods using a discrete generalized Pareto and a generalized Zipf distribution respectively. Both are theoretically motivated and we show that they perform well in estimating rare events in several simulated and real data cases such as word frequency, tornado outbreaks and multiple births. 


\setlength{\leftskip}{0pt}
\setlength{\rightskip}{0cm}
\normalsize



\section{Introduction}\label{par:intro} 

Extreme quantile estimation is an important but difficult problem in statistics, especially when the quantile is beyond the range of the data. In the univariate case, extreme value theory motivates the choice of a parametric family called the generalized Pareto distribution (GPD) which is used to model the tail and estimate the probability of rare events (\cite{Pickands1975}). Let $X$ be a random variable taking values in $[0,x_F)$ with survival function $\bar F_X,$ where $x_F\in \mathbb R_+\cup \{\infty\}$ and $\mathbb R_+=(0,\infty).$ Suppose that there exists a strictly positive sequence $a_u$ such that
\begin{align}\label{eq:convG}
a_u^{-1}(X-u)\mid X\geq u \,\overset{d}{\rightarrow}\, Z,
\end{align}
as $u\rightarrow x_F,$ for some $Z$ following a non-degenerate probability distribution on $[0,\infty),$ where $\overset{d}{\rightarrow}$ denotes weak convergence. A stunning result is that this assumption is sufficient to characterize the limiting distribution: $Z$ follows a generalized Pareto distribution (GPD), defined by its survival function
 \begin{align*}
 1-F_{\text{GPD}}(x;\sigma,\xi)=
\bar F_{\text{GPD}}(x;\sigma,\xi)=\left(1+\xi {x\over \sigma}\right)^{-1/\xi} 1_{\{x<\tau\}},\quad x\geq 0,
 \end{align*}
 with $\tau=\infty$ if $\xi\geq 0,$ $\tau=\sigma/|\xi|$ if $\xi<0,$ where
 $1_{\{x<\tau\}}$ is $1$ if $x<\tau$ and $0$ otherwise, and $\sigma>0.$ We use the convention that if $\xi=0,$ then $(1+\xi x)^{1/\xi}=e^x$. Condition
 (\ref{eq:convG}), written as $X\in \text{MDA}_{\xi},$ means that
 $X$ is in the maximum domain of attraction of an extreme value distribution with shape parameter $\xi$
 (see e.g.\ \cite{Resnick1987}). Sometimes one  says that the law $F$ of $X$ is
 in $\text{MDA}_{\xi}$. In this case, the  sequence of cumulative distribution functions of $a_u^ {-1} (X-u)\mid X\geq u$ converges uniformly to $F_{\text{GPD}}$ on $[0,\infty).$ Thus, for large $u,$
 \begin{align}\label{eq:approxGPD}
 P(X-u> x\mid X\geq u) &=  P\{a_u^{-1} (X-u) >  a_u^{-1}
   x\mid X\geq u\} \notag \\
\shortintertext{is often approximated by}
&\approx  \bar F_{\text{GPD}}(x;\sigma a_u,\xi)\,, 
 \end{align}
motivating the approximation of the
distribution of exceedances of $X$ above a large threshold $u$ by a
GPD (\cite{Davison1990}). In the continuous case, where $X$ is assumed to have a continuous distribution, most common distributions belong to some maximum domain of attraction, 
 and the GPD approximation to the tail of the distribution can be applied. This approximation often works well in practice, although it can be poor when $u$ is not  large enough, for instance
 if $X$ is normal and $u$ is around the $90$th percentile of its distribution. 

Two issues are apparent with applying the GPD method of approximating the distribution
tail to a discrete distribution.  First, a necessary condition
for a discrete distribution $F_X$ to be in some maximum domain of attraction in the case $x_F=\infty$
is that $F_X$ is long-tailed,  i.e., ${\bar F_X(u+1)/ \bar F_X(u)}\rightarrow
1$ as $u\rightarrow \infty$, see \cite{Shimura2012} and
\cite{Anderson1970,Anderson1980}. Without being in a maximum domain of
attraction the GPD approximation \eqref{eq:approxGPD} does not necessarily apply. Note that many common discrete distributions,
including geometric, Poisson and negative binomial distributions, are not long-tailed. 
The second issue in approximating a discrete distribution by a GPD, a continuous distribution,  is that ties are not allowed. To overcome these limitations, we suggest   two
 alternative methods of modeling the tails of discrete observations,
 each one relying on a specific assumption on the underlying
 distribution. For this work, we will only consider approximations to distributions with infinite support.

We say that a discrete random variable $X$ with non-negative values is in the discrete maximum domain of attraction, which we write as $X\in
\text{D-MDA}_{\xi}$, if there exists a random
variable $Y\in \text{MDA}_\xi$ with $\xi\geq 0$ such that $\pr(X\geq k) =  \pr(Y\geq k)$ for $k=0,1,2,\ldots,$ i.e., the equality in
distribution, $X\eid \lfloor Y\rfloor$, holds. We call $Y$ an extension of $X$ and such an extension is not unique. \cite{Shimura2012} showed that $X\in \text{MDA}_\xi$ for
some $\xi\geq 0$ if and only if $X\in\text{D-MDA}_\xi$ and $X$ is
long-tailed. It was also shown by \cite{Shimura2012} that geometric, Poisson and
negative binomial distributions are in $\text{D-MDA}$. This
set is, therefore, strictly larger than the set of all discrete
distributions in $ \text{MDA}_\xi.$

Let $X\in \text{D-MDA}_\xi$ and $Y\in\text{MDA}_\xi$ be the corresponding extension satisfying $X\eid \lfloor Y\rfloor$. Then, for large integers $u,$
\begin{align}
P(X-u= k\mid X\geq u) &= P(Y-u\geq k\mid Y\geq u)-P(Y-u\geq k+1\mid
                          Y\geq u) \notag \\
\shortintertext{which we will approximate as in \eqref{eq:approxGPD} by}
& \approx   
 p_{\text{D-GPD}}(k;\sigma a_u,\xi), \label{eq:approxDGPD} 
\end{align}
 where $p_{\text{D-GPD}}$ is the probability mass function of the
discrete generalized Pareto distribution (D-GPD) defined by 
\begin{equation} \label{e:p.d.gpd}
p_{\text{D-GPD}}(k;\sigma,\xi)=\bar
F_{\text{GPD}}(k;\sigma,\xi)-\bar F_{\text{GPD}}(k+1;\sigma,\xi),
\end{equation}
for $k=0,1,2,\ldots$.
The discrete generalized Pareto distribution has been used by 
\cite{Prieto2014} to model road accidents, while various aspects of discrete Pareto-type distributions were studied in 
 \cite{Krishna2009}, \cite{Buddana2014}, and \cite{Kozubowski2015}. Notice that the scale parameter in (\ref{eq:approxDGPD}) is undetermined since the extension $Y$ is non-unique.

We now discuss an alternative assumption on the distribution of the discrete random
variable $X$ that will allow us to construct another approximation of its tail. Let $p_X$ be the probability mass function of $X$ and suppose that there exists a non-negative random variable $Y\in\text{MDA}_{\xi/(1+\xi)}$ with $\xi\geq 0$ such that $p_X(k)= c \,\bar F_Y(k)$ for $k=d, d+1,d+2,\ldots,$ for some $c>0$ and $d\in\mathbb N_0=\{0,1,\ldots\}.$ We denote this condition by $p_X\in \text{D-MDA}_{\xi/(1+\xi)}$ and call $\bar F_Y$ an extension of $p_X.$ Then for large integers $u$, 
\begin{align}
P(X-u= k\mid X\geq u) &= {p_X(u+k)/p_X(u) \over \sum_{i=0}^\infty
  p_X(u+i)/p_X(u) } \notag \\
&= \frac{P(Y>u+k)/P(Y>u)}{\sum_{i=0}^\infty P(Y>u+i)/P(Y>u)} \notag \\
\shortintertext{which can be approximated as in \eqref{eq:approxGPD},} 
 & \approx  p_{\text{GZD}}\bigl(k; (1+\xi)\sigma a_u,\xi\bigr)\,,\label{eq:approxZM}
\end{align}
where 
\begin{align}\label{eq:zmrepa}
p_{\text{GZD}}(k;\sigma,\xi)= { \left(1+\xi {k\over
  \sigma}\right)^{-1/\xi-1} \over \sum_{i=0}^\infty \left(1+\xi
  {i\over \sigma}\right)^{-1/\xi-1}}, \quad k=0,1,2,\ldots.
\end{align}
We call the probability mass function in (\ref{eq:zmrepa}) generalized Zipf distribution (GZD). In the case $\xi>0,$ the GZD is a Zipf--Mandelbrot distribution whose probability mass function is usually written in
the form  
$p(k)=(k+q)^{-s}/H_{s,q},$ for $k=0,1,2,\ldots,$ $s> 1$ and  $q>0$, where
$H_{s,q}$ is the Hurwitz-Zeta function (\cite{Mandelbrot1953}). The
GZD is of this form with $s=1+1/\xi$ and
$q=\sigma/\xi.$ When $q=1$ the distribution is a Zipf law which is
sometimes presented as the counterpart of the Pareto distribution
because its probability mass function, after a shift, can be written
in a homogeneous form (\cite{Arnold1985}).  Zipf-type
families of discrete distributions have been  fitted to various
data sets such as word frequencies (\cite{Booth1967}), city sizes
(\cite{Gabaix1999}),  company sizes (\cite{Axtell2001}) and numbers of
website hits (\cite{Clauset2009}). In the case $\xi=0,$ the generalized Zipf distribution is a geometric distribution
with probability of success $p=1-e^{-1/\sigma}$ (and so is the
discrete generalized Pareto distribution). 

We will show that the geometric, Poisson and
negative binomial probability mass  functions belong to
$\text{D-MDA}_{0}$, so the tail approximation \eqref{eq:approxZM}
makes sense in these cases. In addition, we will see that $p_X\in \text{D-MDA}_{\xi/(1+\xi)}$ for $\xi\geq 0$ implies $X\in\text{MDA}_{\xi}$ (under an additional condition in the case $\xi=0$).

Equations (\ref{eq:approxDGPD}) and (\ref{eq:approxZM}) motivate the approximation of the tails of some discrete distributions by a D-GPD and GZD, respectively. If $\xi\geq 0,$ then
\begin{align*}
\sup_{k=0,1,2\ldots} \left|{f_{\text{GPD}}(k;\sigma ,\xi) \over  q(k;\sigma ,\xi)} -1\right|\underset{\sigma\rightarrow \infty}{\longrightarrow} 0,
\end{align*}
for either $q = p_{\text{D-GPD}}$ or $q =p_{\text{GZD}}.$ One thus expects the GPD, D-GPD and GZD approximations to give similar results when the scale parameter $\sigma$ is large.

In Section 2, we justify theoretically the three approximations in the case $\xi>0$ by showing that if $p_X\in \text{MDA}_{\xi/(1+\xi)},$ then for any sequence $k_u\subset \mathbb N_0$ such that $\sup_u k_u/u<\infty$, 
 \begin{align*}
{P(X=k_u+u\mid X\geq u) \over q(k_u;\xi u,\xi)} \rightarrow 1,\quad k=0,1,2,\ldots,
 \end{align*}
 where $q\equiv f_{\text{GPD}},$ $p_{\text{D-GPD}}$ and $p_{\text{GZD}}$ (see Theorem \ref{thm:implications} and Proposition \ref{pro:convRatio}). A similar justification is provided in the case $\xi=0$ for the D-GPD and GZD approximations (Theorem \ref{thm:implications2}). We further present an important invariance property satisfied by the D-GPD (Proposition \ref{pro:invariance}). 

In Section 3, we simulate data from different discrete distributions and compare the ability of the three approximations for estimating the probability of regions far from the origin. The D-GPD and GZD approximations outperform the GPD when there are many tied observations, otherwise the results are similar. As opposed to the GZD, the D-GPD has a closed-form survival and probability mass function allowing for an exact likelihood based inference.

 In Section 4, we study a data set counting the occurrence of British words in a corpus and show that the three distributions are appropriate to describe the tails of these word frequencies. We also use the D-GPD and GZD to model the length of French words, the number of tornado outbreaks in the United States over $50$ years, and the number of births in the United States in the last $20$ years, illustrating how they offer potential methods for estimating the probability of rare events when the data are discrete.
 

\section{Theoretical Results} \label{sec:theory}

In this section we describe a number of properties of the
approximation procedures introduced in Section~\ref{par:intro}.

\begin{proposition}\label{pro:converg}
If $X\in \text{D-MDA}_{\xi},$ then there exists a positive sequence
$(a_u, \, u=1,2,\ldots)$ such that
\begin{align}\label{eq:convDiff}
\lim_{u\in \mathbb N_0, \, u\to\infty} \underset{k=0,1,2,\ldots}{\sup}\bigl|{P(X=u+k\mid  X\geq u)}-p_{\text{D-GPD}}(k;a_u, \xi)\bigr|= 0\,,
 \end{align}
where $p_{\text{D-GPD}}$ is defined in \eqref{e:p.d.gpd}. 
\end{proposition}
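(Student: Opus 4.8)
The plan is to use the definition of $\text{D-MDA}_\xi$ directly. Since $X\in\text{D-MDA}_\xi$, fix an extension $Y\in\text{MDA}_\xi$ with $X\eid\lfloor Y\rfloor$, so that $P(X\ge m)=P(Y\ge m)$ for every integer $m$. First I would rewrite the target conditional probability in terms of $Y$: for integers $u,k\ge 0$,
\[
P(X=u+k\mid X\ge u)=P(Y-u\ge k\mid Y\ge u)-P(Y-u\ge k+1\mid Y\ge u),
\]
which is immediate from $P(X\ge m)=P(Y\ge m)$ together with writing the point mass as a difference of survival functions.

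Next I would invoke the continuous GPD limit for $Y$. Because $Y\in\text{MDA}_\xi$ with right endpoint $\infty$ (as $X$ has infinite support), there is a positive sequence $a_u$ for which the survival function $H_u(t):=P\{a_u^{-1}(Y-u)\ge t\mid Y\ge u\}$ converges to a GPD survival function \emph{uniformly} on $[0,\infty)$; after rescaling $a_u$ by the limiting scale I may take the limit to be $\bar F_{\text{GPD}}(\cdot;1,\xi)$. The two conditional probabilities above are then exactly $H_u(k/a_u)$ and $H_u((k+1)/a_u)$, while the scaling identity $\bar F_{\text{GPD}}(k;a_u,\xi)=\bar F_{\text{GPD}}(k/a_u;1,\xi)$ gives $p_{\text{D-GPD}}(k;a_u,\xi)=\bar F_{\text{GPD}}(k/a_u;1,\xi)-\bar F_{\text{GPD}}((k+1)/a_u;1,\xi)$.

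The final step is a term-by-term triangle inequality. Subtracting the two representations and bounding each of the two differences by the uniform error yields
\[
\sup_{k\ge 0}\bigl|P(X=u+k\mid X\ge u)-p_{\text{D-GPD}}(k;a_u,\xi)\bigr|\le 2\,\sup_{t\ge 0}\bigl|H_u(t)-\bar F_{\text{GPD}}(t;1,\xi)\bigr|,
\]
and the right-hand side tends to $0$ as $u\to\infty$ by uniform convergence, establishing \eqref{eq:convDiff}.

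The computation is short, so the work is conceptual rather than technical. The one point that genuinely matters is that the supremum runs over all $k$ at once: it is the \emph{uniform} (not merely pointwise) convergence of the GPD approximation that turns the pointwise telescoping estimate into a bound uniform in $k$, so I would be careful to quote the uniform-convergence statement from the Introduction rather than only weak convergence. I would also note two harmless but worth-stating reductions: that restricting the continuous threshold $u\to\infty$ to integers is fine since they form a subsequence, and that absorbing the limiting scale into $a_u$ keeps $H_u$ matched to the standard GPD survival function appearing in $p_{\text{D-GPD}}$.
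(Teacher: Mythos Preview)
Your proposal is correct and follows essentially the same route as the paper's proof: express $P(X=u+k\mid X\ge u)$ as a difference of conditional survival probabilities of the extension $Y$, invoke the uniform convergence of $P\{a_u^{-1}(Y-u)\ge\cdot\mid Y\ge u\}$ to a GPD survival function, and bound the supremum by twice the uniform error via the triangle inequality. The only cosmetic difference is that the paper keeps the limiting scale $\sigma$ explicit and sets $a_u=\tilde a_u\sigma$, whereas you absorb $\sigma$ into $a_u$ so the limit has unit scale; these are equivalent normalizations.
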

We remark that if $a_u\rightarrow \infty,$ then the result is less interesting because the two terms in (\ref{eq:convDiff}) converge to $0.$

\begin{proof}
By assumption, there exists a random variable $Y\in\text{MDA}_\xi$ for $\xi\geq 0$ and a positive
function $(\tilde a_u, \, u>0)$ such that $X\eid \lfloor Y\rfloor$ and the
sequence of functions $P\bigl( \tilde  a_u^{-1}(Y-u)\geq x\mid Y\geq
u\bigr)$, $x\geq 0$, converges uniformly, as $u\to\infty$, to the function $\bar 
F_{\text{GPD}}(x;\sigma,\xi)$, $x\geq 0$, for some $\sigma>0$ and
$\xi\geq 0$. For a positive integer $u$ we let $a_u= \tilde
a_u\sigma$. Then 
\begin{align*}
& \sup_{k=0,1,2,\ldots} \left|\,P(X=u+ k\mid X\geq u) - p_{\text{D-GPD}}(k;a_u,\xi)\,\right|\\
  = &\sup_{k=0,1,2,\ldots} \Bigl| P\bigl(\tilde  a_u^{-1} (Y-u)\geq
      \tilde  a_u^{-1} k\mid Y\geq u\bigr)-P\bigl(\tilde  a_u^{-1} (Y-u)\geq
      \tilde  a_u^{-1} (k+1)\mid Y\geq u\bigr) \\
& \hskip 0.5in-\bar F_{\text{GPD}}(k; a_u,\xi) +\bar F_{\text{GPD}}(k+1; a_u,\xi) \Bigr|\\
& \leq  2 \,\sup_{x\geq 0} \left| P\left(\tilde  a_u^{-1} (Y-u) \geq {x}\mid Y\geq u\right)-\bar F_{\text{GPD}}(x; \sigma,\xi)\right|
\rightarrow 0
\end{align*}
as $u\to\infty$ over the integers. 
\end{proof}

The following auxiliary lemma is elementary (as the sum can be
sandwiched between two integrals). 

\begin{lemma}\label{le:hzConv}
If $\xi>0$, then, as $u\to\infty$, 
$$ u^{1/\xi}  H_{1+1/\xi,u} \rightarrow \xi,$$
where $H_{s,q}=\sum_{i=0}^\infty (q+i)^{-s}$ is the Hurwitz-Zeta function.
\end{lemma}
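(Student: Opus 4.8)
The plan is to follow the suggested approach and trap the series between two integrals. Write $s = 1+1/\xi$, so that $H_{1+1/\xi,u} = \sum_{i=0}^\infty (u+i)^{-s}$ and, since $\xi>0$, we have $s>1$. The summand is the value at the integer $i$ of the positive, decreasing function $f(x) = (u+x)^{-s}$. The first step is to exploit this monotonicity: for each $i$ one has $\int_i^{i+1} f(x)\,dx \le f(i) \le \int_{i-1}^{i} f(x)\,dx$, and summing over $i=0,1,2,\ldots$ gives the sandwich
\[
\int_0^\infty f(x)\,dx \;\le\; H_{1+1/\xi,u} \;\le\; f(0)+\int_0^\infty f(x)\,dx .
\]

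The second step is the single elementary integral. Because $s>1$,
\[
\int_0^\infty (u+x)^{-s}\,dx = \frac{u^{1-s}}{s-1},
\]
and the whole point is the bookkeeping of the exponents: $s-1 = 1/\xi$ and $1-s = -1/\xi$, so the integral equals $\xi\,u^{-1/\xi}$, while $f(0) = u^{-s} = u^{-(1+1/\xi)}$. Substituting into the sandwich yields
\[
\xi\,u^{-1/\xi} \;\le\; H_{1+1/\xi,u} \;\le\; u^{-(1+1/\xi)} + \xi\,u^{-1/\xi}.
\]

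Finally I would multiply through by $u^{1/\xi}$, turning the bounds into $\xi \le u^{1/\xi} H_{1+1/\xi,u} \le u^{-1} + \xi$, and let $u\to\infty$; since $u^{-1}\to 0$, the squeeze theorem gives $u^{1/\xi} H_{1+1/\xi,u}\to \xi$, as claimed. There is no real obstacle here beyond keeping the exponents straight: the limiting factor $\xi$ comes entirely from the constant $1/(s-1)=\xi$ produced by integrating the power $-s$, and the normalization $u^{1/\xi}$ is exactly what cancels the $u^{1-s}$ decay of that integral, with the leftover term $f(0)=u^{-(1+1/\xi)}$ contributing only the negligible $u^{-1}$ after rescaling.
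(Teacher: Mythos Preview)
Your proof is correct and is precisely the sandwich-between-two-integrals argument that the paper itself alludes to (the paper does not spell out the details, merely noting that the lemma is elementary for this reason). The bookkeeping of exponents is right, and the squeeze is exactly what is needed.
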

 
Recall that a positive and measurable function $f$ on $[1,\infty)$ is
regularly varying if there exists a  positive function $\ell$ such that
$$
\lim_{u\to\infty} {f(ux)\over f(u)}\rightarrow \ell(x),\quad x\geq 1.$$
In this case, there exists $\alpha\in \mathbb R$ such that $\ell(x)=x^{\alpha}$ and we write $f\in \text{RV}_\alpha$ (see e.g.\ \cite{Bingham1989}). If $f\in \text{RV}_{-\alpha}$ for $\alpha\geq 0,$ then
\begin{align}\label{eq:unifConvRV}
\lim_{u\to\infty} \sup_{x\in [1,b]}\left| { f(ux)\over f(u)}-x^{-\alpha}\right|\rightarrow 0,
\end{align}
for $b=\infty$ if $\alpha>0$, and for any $b<\infty$ if $\alpha=0.$ If $f\in
\text{RV}_{-\alpha}$ for $\alpha>0,$ then by Potter's bounds (see e.g.\ \cite{Resnick1987}) for any $\epsilon >0$ there
is $u_\epsilon\in (0,\infty)$ such that 
\begin{align}\label{eq:potterBound}
e^{-\epsilon} x^{-\alpha-\epsilon} \leq{ f(ux)\over f(u)} \leq  \,e^{\epsilon} x^{-\alpha+\epsilon},\quad x\geq 1,
\end{align}
for $u\geq u_\epsilon$. We say that $X$ is regularly varying if $\bar F_X\in
 \text{RV}_{-\alpha}$ for some $\alpha > 0,$ a necessary and sufficient condition for $X\in \text{MDA}_{1/\alpha}.$ 

The following result sheds some light on the approximation suggested
in \eqref{eq:approxZM}. 

\begin{theorem}\label{thm:implications} 
If $p_X\in \text{D-MDA}_{\xi/(1+\xi)}$ for $\xi>0,$ then $X\in
\text{MDA}_{\xi}$ and for any sequence of nonnegative integers
$(k_u, \, u=1,2,\ldots)$ such that $\sup_u k_u/u<\infty$, 
 \begin{align}\label{eq:convpZMRatio}
\lim_{u\in \mathbb N_0, \, u\to\infty} {P(X=k_u+u\mid X\geq u) \over
   p_{\text{GZD}}(k_u;\xi u,\xi)} = 1.
 \end{align}
\end{theorem}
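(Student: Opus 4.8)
The plan is to exploit the representation $p_X(k) = c\,\bar F_Y(k)$ for $k \geq d$ together with the fact that, for $\xi > 0$, the condition $Y \in \text{MDA}_{\xi/(1+\xi)}$ is equivalent to $\bar F_Y \in \text{RV}_{-(1+1/\xi)}$ (the index solves $1/\alpha = \xi/(1+\xi)$, i.e.\ $\alpha = 1 + 1/\xi > 1$). To establish $X \in \text{MDA}_\xi$ I would compute $\bar F_X(u) = \sum_{k \geq u} p_X(k) = c \sum_{k \geq u}\bar F_Y(k)$ for $u \geq d$. Since $\bar F_Y$ is nonincreasing and regularly varying of index $-(1+1/\xi)$, a Karamata argument---sandwiching the sum between $\int_u^\infty \bar F_Y$ and $\bar F_Y(u) + \int_u^\infty \bar F_Y$, exactly the device used in the parenthetical remark preceding Lemma~\ref{le:hzConv}---gives $\bar F_X(u) \sim c\,\xi u\,\bar F_Y(u)$. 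As $u\,\bar F_Y(u) \in \text{RV}_{-1/\xi}$, this shows $\bar F_X \in \text{RV}_{-1/\xi}$ and hence $X \in \text{MDA}_\xi$.

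For the ratio in \eqref{eq:convpZMRatio}, note that for large $u$ we have $k_u + u \geq u \geq d$, so the constant $c$ cancels and
$$P(X = k_u + u \mid X \geq u) = \frac{p_X(u + k_u)}{\bar F_X(u)} = \frac{\bar F_Y(u + k_u)}{\sum_{i \geq 0}\bar F_Y(u + i)}.$$
Taking $\sigma = \xi u$ in \eqref{eq:zmrepa} gives $\xi/\sigma = 1/u$, whence
$$p_{\text{GZD}}(k_u; \xi u, \xi) = \frac{(1 + k_u/u)^{-(1+1/\xi)}}{\sum_{i \geq 0}(1 + i/u)^{-(1+1/\xi)}};$$
writing $(1 + i/u)^{-(1+1/\xi)} = u^{1+1/\xi}(u + i)^{-(1+1/\xi)}$ identifies the denominator as $u^{1+1/\xi} H_{1+1/\xi, u}$, which is asymptotic to $\xi u$ by Lemma~\ref{le:hzConv}.

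Forming the quotient of the two displays and multiplying numerator and denominator by $\bar F_Y(u)$, I would split it as
$$\frac{P(X = k_u + u \mid X \geq u)}{p_{\text{GZD}}(k_u; \xi u, \xi)} = \frac{\bar F_Y(u(1 + k_u/u))/\bar F_Y(u)}{(1 + k_u/u)^{-(1+1/\xi)}}\cdot \frac{\sum_{i\geq 0}(1+i/u)^{-(1+1/\xi)}}{\sum_{i\geq 0}\bar F_Y(u+i)/\bar F_Y(u)}.$$
In the second factor the numerator is $\sim \xi u$ (Lemma~\ref{le:hzConv}) and, by the same Karamata estimate as above, the denominator is $\sim \xi u$, so the factor tends to $1$. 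In the first factor, set $x_u := 1 + k_u/u$; the hypothesis $\sup_u k_u/u < \infty$ confines $x_u$ to a compact interval $[1,b]$, so the uniform convergence \eqref{eq:unifConvRV} for the regularly varying $\bar F_Y$ yields $\bar F_Y(u x_u)/\bar F_Y(u) - x_u^{-(1+1/\xi)} \to 0$, and dividing by $x_u^{-(1+1/\xi)}$---bounded below by $b^{-(1+1/\xi)} > 0$ on $[1,b]$---forces this factor to $1$ as well. The product then converges to $1$, which is \eqref{eq:convpZMRatio}.

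I expect the main obstacle to be the bookkeeping forced by the moving sequence $k_u$. Because $\bar F_Y(u + k_u) \to 0$, the numerator and denominator factors cannot be handled as separately convergent quantities; one must pair each normalizing sum with a factor $\bar F_Y(u)$ before taking limits. The decisive input is the uniform---rather than merely pointwise---regular-variation bound \eqref{eq:unifConvRV}, which is precisely what legitimizes evaluating the limit relation at the varying argument $x = x_u$ over the compact range guaranteed by $\sup_u k_u/u < \infty$.
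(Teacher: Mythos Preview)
Your proposal is correct and follows essentially the same route as the paper: the identical two-factor decomposition of the ratio, uniform convergence \eqref{eq:unifConvRV} for the first factor (exploiting that $1+k_u/u$ stays in a compact set), and Lemma~\ref{le:hzConv} for the normalizing Hurwitz-sum. The only cosmetic difference is that for $\bar F_X\in\text{RV}_{-1/\xi}$ and for the denominator $\sum_{i\geq 0}\bar F_Y(u+i)/\bar F_Y(u)$ you appeal to a Karamata integral sandwich, whereas the paper instead uses Potter's bounds \eqref{eq:potterBound} combined with dominated convergence and Lemma~\ref{le:hzConv}; the two devices are interchangeable here.
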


\begin{proof}

By assumption, there exists a survival function $\bar F$ such that $\bar F(k)=c\, p_X(k)$ for $c>0,$ $k$ large enough and $\bar F\in \text{RV}_{-1/\xi-1}.$ The last condition is equivalent to $\bar F(\lfloor \cdot \rfloor) \in \text{RV}_{-1/\xi-1}$ (\cite{Shimura2012}). Therefore, 
\begin{align*}
{\pr(X\geq u x)\over \pr(X\geq u)} &= {\sum_{i=\lceil u x\rceil}^\infty p_X(i) \over \sum_{i=u}^\infty p_X(i)} 
 ={ \int_{ux}^\infty \bar F(\lfloor y \rfloor )dy -\int_{ux}^{\lceil ux \rceil} \bar F(\lfloor y \rfloor )dy \over \int_{u}^{\infty }  \bar F(\lfloor y \rfloor )dy } \\
 & ={ u \int_{x}^\infty \bar F(\lfloor u z \rfloor )/\bar F(\lfloor u \rfloor )dz - (\lceil u x\rceil-ux ) \bar F(\lfloor u x \rfloor )/\bar F(\lfloor u \rfloor ) \over u \int_{1}^{\infty }  \bar F(\lfloor u z \rfloor )/\bar F(\lfloor u \rfloor )dz } \rightarrow x^{-1/\xi},\quad x\geq 1,
 \end{align*}
applying (\ref{eq:potterBound}) and dominated convergence. Thus, $\bar F_X\in\text{RV}_{-1/\xi}$ and $X\in\text{MDA}_{\xi}.$

For the second part of the theorem, we have 
$$ 
{P(X=k_u+u\mid X\geq u) \over p_{\text{GZD}}(k_u;\xi u,\xi)} 
= \frac{\bar F(u+k_u)/\bar F(u)}{(1+k_u/u)^{-1/\xi-1}}
\frac{\sum_{i=0}^\infty
  (1+i/u)^{-1/\xi-1}}{\sum_{i=0}^\infty \bar F(u+i)/\bar F(u)}.
$$ 
By the uniform convergence \eqref{eq:unifConvRV} and the the fact that
$k_u$ grows at most linearly fast, we conclude that 
$${\bar F(u+k_u)/\bar F(u) \over (1+k_u/u)^{-1/\xi}} \rightarrow 1,$$
as $u\to\infty$ over the integers. Second, Lemma \ref{le:hzConv} yields
\begin{align*}
u^{-1} {\sum_{i=0}^\infty (1+i/u)^{-1/\xi-1} } \rightarrow \xi.
\end{align*}
Third, it follows from (\ref{eq:potterBound}) that for $\epsilon\in
(0,1/\xi),$ there exists $u_\epsilon>0$  such that for $u\geq
u_\epsilon$, 
\begin{align*}
u^{-1} \sum_{i=0}^\infty \bar F(u+i)/\bar F(u)
&\leq u^{-1}  e^{\epsilon} \sum_{i=0}^\infty \left(1+{i\over u}\right)^{-1-1/\xi+\epsilon} \rightarrow\;  {\xi  e^\epsilon \over 1-\xi \epsilon},
\end{align*}
using again Lemma~\ref{le:hzConv}. A lower bound is found in the same manner and we let $\epsilon\rightarrow 0$ to conclude.
\end{proof}

We now present a tail equivalence property between the probability mass and density functions of the GZD, D-GPD and GPD. A direct consequence is that the denominator $p_{\text{GZD}}$ in
(\ref{eq:convpZMRatio}) can be replaced either  by $p_{\text{D-GPD}}$
or by $f_{\text{GPD}}.$ 

\begin{proposition}\label{pro:convRatio}
If $\xi\geq 0,$ then%
\begin{align*}
\lim_{\sigma\to\infty} \sup_{k=0,1,2,\ldots}\left| {p_{\text{D-GPD}}(k;
  \sigma, \xi)\over p_{\text{GZD}}(k; \sigma ,\xi)}  -1 \right| =
\lim_{\sigma\to\infty} 
\sup_{k=0,1,2,\ldots} \left| {p_{\text{D-GPD}}(k; \sigma, \xi)\over
  f_{\text{GPD}}(k; \sigma,\xi)}-1 \right|= 0. 
\end{align*}
\end{proposition}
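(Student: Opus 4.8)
The plan is to prove both uniform ratio limits by writing each probability mass function explicitly as a difference or normalized version of $\bar F_{\text{GPD}}$ and extracting the leading-order behavior as $\sigma\to\infty$. The governing idea is that for large $\sigma$ the increments over unit steps become negligible, so the discrete objects $p_{\text{D-GPD}}$ and $p_{\text{GZD}}$ both collapse onto the continuous density $f_{\text{GPD}}$. I would treat the two supremum limits separately but use a common estimate, namely a controlled expansion of $\bar F_{\text{GPD}}(k;\sigma,\xi)-\bar F_{\text{GPD}}(k+1;\sigma,\xi)$ in terms of $f_{\text{GPD}}(k;\sigma,\xi)$ that is uniform in $k$.

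First I would handle the ratio $p_{\text{D-GPD}}/f_{\text{GPD}}$. Writing $g(x)=\bar F_{\text{GPD}}(x;\sigma,\xi)=(1+\xi x/\sigma)^{-1/\xi}$, note that $-g'(x)=f_{\text{GPD}}(x;\sigma,\xi)=\sigma^{-1}(1+\xi x/\sigma)^{-1/\xi-1}$, so by the mean value theorem there exists $\theta_k\in(k,k+1)$ with $p_{\text{D-GPD}}(k;\sigma,\xi)=g(k)-g(k+1)=f_{\text{GPD}}(\theta_k;\sigma,\xi)$. The ratio then becomes $f_{\text{GPD}}(\theta_k)/f_{\text{GPD}}(k)=\bigl(1+\xi\theta_k/\sigma\bigr)^{-1/\xi-1}\bigl(1+\xi k/\sigma\bigr)^{1/\xi+1}$, which equals $\bigl(1+\xi(\theta_k-k)/(\sigma+\xi k)\bigr)^{-1/\xi-1}$. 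Since $0<\theta_k-k<1$ and $\sigma+\xi k\ge\sigma$, the inner term lies between $1$ and $1+\xi/\sigma$ uniformly in $k$, so the whole ratio is squeezed between $(1+\xi/\sigma)^{-1/\xi-1}$ and $1$. Both bounds tend to $1$ as $\sigma\to\infty$, giving the uniform limit. The case $\xi=0$ is handled by the stated convention $g(x)=e^{-x/\sigma}$, for which the same mean-value argument applies verbatim.

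Next I would treat $p_{\text{D-GPD}}/p_{\text{GZD}}$. Here I would use the denominators: both are normalized versions of closely related sequences, so I would write the ratio as a product of the pointwise ratio of unnormalized terms times the ratio of the two normalizing sums. The unnormalized numerator ratio is precisely the quantity $p_{\text{D-GPD}}(k)/f_{\text{GPD}}(k)$ handled above (up to the common factor $\sigma$ hidden in $f_{\text{GPD}}$ versus the kernel $(1+\xi k/\sigma)^{-1/\xi-1}$ defining $p_{\text{GZD}}$), which already converges to $1$ uniformly in $k$. The remaining factor, the ratio of normalizing constants $\sum_k p_{\text{D-GPD}}(k)\big/\sum_k (1+\xi k/\sigma)^{-1/\xi-1}$ up to the appropriate $\sigma$ scaling, is independent of $k$ and converges because $\sum_k p_{\text{D-GPD}}(k)=1$ while the GZD denominator sum is asymptotically $\sigma/\xi$ by Lemma~\ref{le:hzConv} (with $\sigma=\xi u$). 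Alternatively, and more cleanly, I would chain through the first result: since $p_{\text{GZD}}$ is itself a normalized version of $f_{\text{GPD}}$ and $\sum_k f_{\text{GPD}}(k;\sigma,\xi)\to 1$ as $\sigma\to\infty$ by the integral comparison, the ratio $f_{\text{GPD}}/p_{\text{GZD}}$ also tends uniformly to $1$, and then the triangle-type combination $p_{\text{D-GPD}}/p_{\text{GZD}}=(p_{\text{D-GPD}}/f_{\text{GPD}})(f_{\text{GPD}}/p_{\text{GZD}})$ finishes the argument.

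The main obstacle is making the uniformity in $k$ watertight rather than merely pointwise: the mean-value point $\theta_k$ depends on $k$, and one must verify that the squeezing bounds are genuinely $k$-free, which they are only because the monotone structure of $(1+\xi x/\sigma)^{-1/\xi-1}$ forces $\theta_k-k$ into $(0,1)$ and the denominator $\sigma+\xi k$ can only help. For the second ratio, the subtlety is confirming that the normalizing sums converge after the correct $\sigma$-rescaling; this is exactly where Lemma~\ref{le:hzConv} does the work for $\xi>0$, while for $\xi=0$ the normalizers are geometric sums computed in closed form. I expect no genuine difficulty beyond bookkeeping, since everything reduces to monotone one-step increments of an explicit regularly varying function.
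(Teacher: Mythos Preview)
Your proposal is correct and follows essentially the same route as the paper: first show $p_{\text{D-GPD}}/f_{\text{GPD}}\to 1$ uniformly in $k$, then observe that $f_{\text{GPD}}/p_{\text{GZD}}$ is independent of $k$ and tends to $1$ by Lemma~\ref{le:hzConv}, and finally chain the two. The only cosmetic difference is that for the first ratio the paper computes directly, reducing to the expression $(\sigma+\xi k)\bigl\{1-(1+\xi/(\sigma+\xi k))^{-1/\xi}\bigr\}$ and noting it depends only on $t=\sigma+\xi k\ge\sigma$, whereas you use the mean value theorem to get the squeeze $(1+\xi/\sigma)^{-1/\xi-1}\le p_{\text{D-GPD}}(k)/f_{\text{GPD}}(k)\le 1$; both arguments hinge on the same fact that the relevant quantity is controlled by $\sigma+\xi k\ge\sigma\to\infty$, and the $\xi=0$ case is handled identically.
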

\begin{proof}
Suppose first that $\xi>0$. Then 
\begin{align*}
{p_{\text{D-GPD}}(k;\sigma, \xi)\over f_{\text{GPD}}(k;\sigma, \xi)} &= 
{ \left(1+\xi{ k\over \sigma}\right)^{-1/\xi}-\left(1+\xi{k+1\over \sigma}\right)^{-1/\xi} \over {1\over \sigma } \left(1+\xi {k\over \sigma}\right)^{-1/\xi-1}} \\
&=  \left\{1-\left(1+{\xi\over \sigma+\xi k}\right)^{-1/\xi}\right\} (\sigma+\xi k)\rightarrow 1,\nonumber
\end{align*}
uniformly in $k=0,1,2,\ldots$. Furthermore, 
\begin{align*}
  \sup_{k=0,1,2,\ldots} {f_{\text{GPD}}(k;\sigma,\xi)\over  p_{\text{GZD}}(k;\sigma,\xi) }    &= 
   \sigma^{-1} \,\sum_{i=0}^\infty (1+\xi i/\sigma)^{-1/\xi-1} \rightarrow 1
 \end{align*}
by Lemma~\ref{le:hzConv}. In the case $\xi=0,$ 
$$
{p_{\text{D-GPD}}(k;\sigma, 0)/f_{\text{GPD}}(k;\sigma, 0)}= {p_{\text{GZD}}(k;\sigma, 0)/f_{\text{GPD}}(k;\sigma, 0)}= 
\sigma
(1-e^{-1/\sigma})\rightarrow 1
$$
as $\sigma\to\infty$. 
\end{proof}

Next we extend Theorem~\ref{thm:implications} to the case
$\xi=0$. Recall that a distribution $F$ is in $\text{MDA}_0$ if and
only if the survival function has a representation 
\begin{equation} \label{e:d.0}
\bar F(x)=c(x)\exp\left\{-\int_0^x {1\over a(y)} dy\right\},\quad  -\infty<x<x_F,
\end{equation}
where $c(\cdot)$ is a positive function with $c(x)\to c>0$ as $x\uparrow 
x_F$, and  $a(\cdot)$ is a positive, differentiable function $a(\cdot)$
 with  $\lim_{x\uparrow x_F} a'(x)=0$. If $c(x)=c$ on $(d,x_F)$ for
 some $d<x_F$, then we say that  the
 distribution $F$ satisfies the  von Mises condition. The function
 $a(\cdot)$ in \eqref{e:d.0} is sometimes called the  auxiliary
 function. Note, however, that it is only uniquely defined (on
 $(d,x_F)$) under the  von Mises condition; see  
\cite{Embrechts2013}.  Recall
that in the sequel we only consider the case of unbounded support,
i.e. $x_F=\infty$.

\begin{theorem}\label{thm:implications2}
Suppose that $p_X\in \text{D-MDA}_0$ and, moreover, that a distribution $F$
such that $p_X(k)=\bar F(k)$ has the property that an auxiliary function of $\bar F$
satisfies 
$\lim_{x\rightarrow \infty} a(x)= \sigma\in [0,\infty]$. Then 
 \begin{align}\label{eq:convpZMRatio2}
\lim_{u\in \mathbb N_0, \, u\to\infty}P(X=k+u\mid X\geq u)  =
   p_{\text{Ge}}(k;\sigma) ,\quad k=0,1,2,\ldots\,,
 \end{align}
 where $p_{\text{Ge}}(k;\sigma) = (1-e^{-1/\sigma})\,  e^{-k/\sigma}$ 
   is the probability mass function of a geometric
 distribution if $0<\sigma<\infty$, and  $p_{\text{Ge}}(k;\infty) =p_{\text{Ge}}(k;0)=0$. Furthermore, if $\sigma\in [0,\infty),$ then $X\in \text{D-MDA}_0.$ 
\end{theorem}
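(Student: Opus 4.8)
The plan is to write the exceedance probability as a ratio of tail values of $\bar F$ and analyse numerator and denominator separately through the $\text{MDA}_0$ representation \eqref{e:d.0}. Since $p_X(k)=\bar F(k)$ for all large $k$, for $u\ge d$
\begin{equation*}
P(X=k+u\mid X\ge u)=\frac{\bar F(u+k)}{\sum_{i\ge 0}\bar F(u+i)}=\frac{R_u(k)}{\sum_{i\ge 0}R_u(i)},\qquad R_u(j):=\frac{\bar F(u+j)}{\bar F(u)},
\end{equation*}
so everything reduces to the asymptotics of the ratios $R_u(j)$ as $u\to\infty$ over the integers.

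First I would treat a single ratio. By \eqref{e:d.0},
\begin{equation*}
R_u(j)=\frac{c(u+j)}{c(u)}\exp\left(-\int_u^{u+j}\frac{dy}{a(y)}\right).
\end{equation*}
Here $c(u+j)/c(u)\to 1$ because $c(x)\to c>0$, and since the auxiliary function satisfies $a(y)\to\sigma$ the integral over an interval of fixed length $j$ tends to $j/\sigma$. Hence $R_u(j)\to e^{-j/\sigma}$ for each fixed $j$, with the natural conventions $e^{-j/\infty}=1$ and, for $\sigma=0$, $e^{-j/0}=0$ when $j\ge 1$. Taking $j=k$ controls the numerator.

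The crux is the denominator $D_u=\sum_{i\ge 0}R_u(i)$, where the limit must be interchanged with an infinite sum; I expect this to be the main obstacle. The required domination comes again from \eqref{e:d.0}: fixing $\epsilon>0$ and using $a(y)\le\sigma+\epsilon$ for all large $y$ gives $\int_u^{u+i}dy/a(y)\ge i/(\sigma+\epsilon)$, whence $R_u(i)\le C\,e^{-i/(\sigma+\epsilon)}$ uniformly in $i$ for all large $u$. When $0<\sigma<\infty$ this dominating sequence is summable, so dominated convergence yields $D_u\to\sum_{i\ge 0}e^{-i/\sigma}=(1-e^{-1/\sigma})^{-1}$ and the ratio tends to $(1-e^{-1/\sigma})e^{-k/\sigma}=p_{\text{Ge}}(k;\sigma)$, which is \eqref{eq:convpZMRatio2}. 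When $\sigma=\infty$ every term $R_u(i)\to 1$, so Fatou forces $D_u\to\infty$ while the numerator stays bounded, giving limit $0$. When $\sigma=0$ the same bound (now with $a(y)\le\epsilon$) makes $\sum_{i\ge 1}R_u(i)\to 0$, so $D_u\to 1$ and for each fixed $k\ge 1$ the probability tends to $0$, the exceedance mass concentrating at the origin as $\sigma\downarrow 0$ in the geometric formula.

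Finally, for the claim $X\in\text{D-MDA}_0$ when $\sigma\in[0,\infty)$, I would exhibit an explicit extension by setting $G(x):=\sum_{i\ge 0}\bar F(x+i)$. Then $G(k)=\sum_{i\ge 0}p_X(k+i)=\bar F_X(k)$ at every large integer, and $G$ may be modified on a compact set so that it becomes a genuine non-increasing survival function agreeing with $\bar F_X$ at all integers without altering its tail. The domination above shows $G(x)/\bar F(x)\to(1-e^{-1/\sigma})^{-1}$ for $0<\sigma<\infty$ and $\to 1$ for $\sigma=0$, so $G$ is tail-equivalent to $\bar F\in\text{MDA}_0$; since tail equivalence preserves the maximum domain of attraction (\cite{Resnick1987}), the random variable $Y$ associated with $G$ lies in $\text{MDA}_0$ and extends $X$, giving $X\in\text{D-MDA}_0$. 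The case $\sigma=\infty$ is excluded precisely because $G(x)/\bar F(x)\to\infty$ there.
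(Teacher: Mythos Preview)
Your treatment of \eqref{eq:convpZMRatio2} is essentially the paper's: express the conditional probability through the ratios $R_u(j)=\bar F(u+j)/\bar F(u)$, use \eqref{e:d.0} to get $R_u(j)\to e^{-j/\sigma}$, and pass to the limit in the infinite sum via dominated convergence for $0<\sigma<\infty$, Fatou for $\sigma=\infty$, and a direct bound for $\sigma=0$. You are more explicit than the paper about the dominating sequence (deriving $R_u(i)\le C\,e^{-i/(\sigma+\epsilon)}$ from $a(y)\le\sigma+\epsilon$), whereas the paper simply invokes dominated convergence without writing the bound; for $\sigma=0$ the paper just notes the denominator is at least $1$, while you go further and show it actually tends to $1$.

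The second claim, $X\in\text{D-MDA}_0$ for $\sigma\in[0,\infty)$, is where the two arguments genuinely diverge. The paper computes
\[
\frac{p_X(n)}{P(X\ge n)}\longrightarrow 1-e^{-1/\sigma}\in(0,\infty)
\]
and declares that this ``immediately implies'' $X\in\text{D-MDA}_0$, leaving the construction of a continuous extension of $\bar F_X$ to the reader. You instead build the extension explicitly: $G(x)=\sum_{i\ge 0}\bar F(x+i)$ agrees with $\bar F_X$ at large integers, and the same geometric domination already established yields $G(x)/\bar F(x)\to(1-e^{-1/\sigma})^{-1}$ (or $1$ when $\sigma=0$), so $G$ is tail-equivalent to $\bar F\in\text{MDA}_0$ and therefore itself lies in $\text{MDA}_0$ by the closure of maximum domains of attraction under tail equivalence. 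Your route is more self-contained and recycles the bound from the first part; the paper's is shorter but the word ``immediately'' hides essentially the extension step you carry out.
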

\begin{proof}
Note that for large integers $u$, 
\begin{equation} \label{e:for.ge}
P(X=k+u\mid X\geq u)  = \frac{\bar F(k+u)/\bar
  F(u)}{\sum_{i=0}^\infty \bar F(i+u)/\bar  F(u)}\,.
\end{equation}
We have for every $i=0,1,2,\ldots$, 
$$
 \bar F(i+u)/\bar  F(u) 
  = \frac{c(i+u)}{c(u)}\exp\left\{-\int_0^i{1/ a(u+y)}dy\right\}  \rightarrow e^{-i/\sigma}
$$ 
as $u\to\infty$. If $0<\sigma<\infty$, then the dominated convergence
theorem gives us 
$$ 
\sum_{i=0}^\infty \bar F(i+u)/\bar  F(u) \to \sum_{i=0}^\infty
e^{-i/\sigma} = 1/(1-e^{-1/\sigma})\,,
$$ 
and \eqref{eq:convpZMRatio2} follows. If $\sigma=\infty$, 
$$ 
\sum_{i=0}^\infty \bar F(i+u)/\bar  F(u) \to\infty
$$
by Fatou's lemma, and \eqref{eq:convpZMRatio2}, once again,
follows. If $\sigma=0$, the claim follows from the fact that the
denominator in \eqref{e:for.ge} cannot be smaller than 1. 

For the second part of the proposition, it follows from
$$
p_X(n) = c(n)\exp\left\{-\int_0^n {1\over a(y)} dy\right\}
$$ 
for all $n$ and $a(y)\to\sigma \in [0,\infty)$ that
$$
\lim_{n\to\infty} \frac{p_X(n)}{P(X\geq n)} =
1-e^{-1/\sigma}\in (0,\infty)\,,
$$
which immediately implies that $X\in \text{D-MDA}_0$ as well. 
\end{proof}

To summarize, for a discrete random variable $X$, 
the conditions $X\in \text{MDA},$ $X\in \text{D-MDA}$
and $p_X\in \text{D-MDA}$ are  related to each other as follows. 
 If $\xi\geq 0,$ then $X\in  \text{MDA}_{\xi}$ if and only if $ X\in
 \text{D-MDA}_{\xi}  \mbox{ and } X  \mbox{ is long-tailed}.$ If $\xi> 0,$ then $p_X\in \text{D-MDA}_{\xi/(1+\xi)}  $ implies $X\in
 \text{D-MDA}_{\xi}$; the same implication holds in the case $\xi=0$ if the auxiliary function of the extension of $p_X$ satisfies $a(x)\rightarrow\sigma\in  (0,\infty)$ as $x\rightarrow\infty.$

The condition $p_X\in \text{D-MDA}$ is satisfied, among others, by the
Zipf--Mandelbrot, geometric, Poisson and negative binomial
distributions, as shown in the next example. 

\begin{example}\label{ex:dmda}
The probability mass function of a Zipf--Mandelbrot distribution with
parameters $s>1$ and $q>0$ is in $\text{D-MDA}_{1/s}$ because it
is regularly varying of order  $-s$.  

 The probability mass function of a geometric distribution belongs to
 $\text{D-MDA}_0$ as it coincides up  to a constant with the survival
 function of an exponential distribution.  The latter distribution
 clearly satisfies the von Mises condition and thus is a member of
 $\text{MDA}_0$. The auxiliary function  is, in fact, equal
 (eventually) to $1/\lambda,$ where $\lambda$ is  the rate of the
 exponential distribution. 

 The probability mass function $p$ of a Poisson distribution with rate
 $\lambda>0$ coincides on  $k=0,1,2,\ldots$ with the function 
$$ g(x)={\lambda^x e^{-\lambda} \over \Gamma(x+1)},$$ 
 a continuous function on $\mathbb R_+$ satisfying $\lim_{x\rightarrow
   \infty}  g(x)= 0$. Moreover, 
$$
\frac{d}{dx} \log g(x) = -\psi_0(x+1) + \log \lambda\,,
$$
where $\psi_0$ is the polygamma function of order 0. Since
$\psi_0(x)\to \infty$ as $x\to\infty$, we see that 
$ g^\prime(x)<0$ for $x$ sufficiently large. Therefore, $ \bar F_Y(x) =
g(x)/ g(d)$ is a survival function on  $[d,\infty)$ for some $d\geq
0.$ Furthermore, 
$$
\frac{d}{dx} \left( -\frac{1}{g^\prime(x)} \right)= -\frac{\psi_1(x+1)}{(\psi_0(x+1) - \log \lambda)^2}\,,
$$
where $\psi_1=\psi_0^\prime$ is is the polygamma function of order
1. Since $\psi_1(x)\to 0$ as $x\to\infty$, we conclude that $F$
satisfies the  von Mises condition, with  the auxiliary function
$a(x)=(\psi_0(x+1) - \log \lambda)^{-1}\to 0$ as
$x\to\infty$. Therefore, the Poisson probability mass function is in
$\text{D-MDA}_0$. 
 
Similarly, the probability mass function of the negative binomial
distribution with probability of success  $p\in (0,1)$ and number of
successes $r>0$ is also in  $\text{D-MDA}_0$ because it coincides on
$\{ 0,1,2,\ldots\}$ with the function 
$$
g(x)={p^r\over \Gamma(r)} {\Gamma(x+r)\over \Gamma(x+1)}(1-p)^x\,, 
$$
a continuous function on $\mathbb R_+$. It is simple to check that
$\lim_{x\rightarrow \infty} g(x) = 0,$ and $g'(x)<0$ for $x$ large
enough, so that $ \bar F_Y(x) =
g(x)/ g(d)$ is a survival function on  $[d,\infty)$ for some $d\geq
0$. Furthermore, $g(x) \sim cx^{r-1}(1-p)^x$ for large $x$, where
$c$ is a positive constant. Therefore, $\bar F_Y$ is of the form
\eqref{e:d.0} with the auxiliary function 
$$
a(x) =\frac{1}{-\log (1-p) - (r-1)/x}, \ \ \text{$x$ large,}
$$
and so it converges to $-1/\log (1-p)$ as $x\to\infty$. 
\end{example}

We conclude this section with a discussion designed to provide 
some intuition on how the approximation methods suggested above
differ, assuming that both apply in a given situation. 
 First of all, one would  expect the D-GPD  and GZD approximations to perform
similarly when $\xi$ is close to  zero because both approximating 
distributions coincide with a geometric distribution when $\xi=0.$
Second, Proposition~\ref{pro:convRatio}  suggests that, regardless of
the underlying justification, if one uses either $p_{\text{D-GPD}}(k;
  \sigma, \xi)$, $f_{\text{GPD}}(k; \sigma,\xi)$ or $p_{\text{GZD}}(k;
  \sigma ,\xi )$ as an approximation to $P(X-u= k\mid
  X\geq u)$, one should not expect major differences as long as one
  chooses the scale  parameter $\sigma$ to be large. This would always
  be the case if $X$ is   long-tailed, since the scale 
   parameter is chosen to be proportional to the normalization sequence 
  $a_u$ defined in (\ref{eq:convG}), which grows to infinity if and only if $X$ is
  long-tailed. 

When using a continuous distribution, such as the generalized Pareto
distribution,  to approximate the probabilities related to 
a discrete distribution, it is also common to use a
``continuity correction'' and shift the argument in the continuous
approximation by some $\delta\in [0,1)$. In our situation this amounts
to replacing $f_{\text{GPD}}(k; \sigma,\xi)$
by $f_{\text{GPD}}(k+\delta; \sigma,\xi)$, some $\delta\in [0,1)$. When $\xi>0,$ it
is elementary to check that, as $\sigma\to\infty$,  
\begin{align*}
{p_{\text{D-GPD}(\sigma,\xi)}(k)\over
  f_{\text{GPD}(\sigma,\xi)}(k+\delta)}  = 
1+{(1+\xi)(2\delta-1) \over 2\sigma } +O(\sigma^{-2}) 
\end{align*}
for every $k=0,1,2,\ldots$. Therefore, the approximations by
$p_{\text{D-GPD}(\sigma,\xi)}(k)$ and $f_{\text{GPD}}(k+\delta;
\sigma,\xi)$ with large $\sigma$ are most similar when
$\delta=1/2$, a property that will be  illustrated in the empirical
part. Similarly, in the case $\xi=0,$ as $\sigma\to\infty$, 
\begin{align*}
{p_{\text{D-GPD}(\sigma,\xi)}(k)\over
  f_{\text{GPD}(\sigma,\xi)}(k+\delta)}=  \sigma e^{\delta/\sigma} (
  1-e^{-1/\sigma})=  1+\frac{2 \delta-1}{2\sigma}+O(\sigma^{-2})
\end{align*}
for every $k=0,1,2,\ldots$, 
and the fastest convergence to the unity is again found when $\delta=1/2$. 

The final result of this section accomplishes two things. Its first
part is related to the well-known invariance property of the
generalized Pareto distribution: its residual lifetime 
is again generalized Pareto distributed. More precisely, if
$Y$ has the generalized Pareto distribution with scale parameter
$\sigma$ and shape parameter $\xi\geq 0$, then the exceedance $Y-u$ has,
given $Y\geq u$, the generalized Pareto distribution with scale
parameter $\sigma+\xi u$ and shape parameter $\xi$, for all  $u\geq
0$. This invariance property is important  when approximating the
exceedance distribution using generalized Pareto distributions
because changing the threshold does not alter the distributional
assumptions used in the approximation.  It is easily checked that the
D-GPD has an analogous property. Moreover, the D-GPD also exhibits the property that discretizing a GPD using different types of rounding does not affect the fact that a D-GPD is obtained, and the shape parameter remains invariant. 


\begin{proposition}\label{pro:invariance}
Let $Y$ follow a GPD with scale
parameter $\sigma>0$ and shape parameter $\xi\geq 0$. Let $0<h\leq 1$. 
If $X=\lfloor \lambda Y +1-h\rfloor$, 
then for any integer $u \geq 1-h$, the distribution of $X-u\mid X\geq
u$ is a D-GPD with shape parameter $\lambda \sigma
+\xi(u+h-1)$ and scale parameter~$\xi$. 
 \end{proposition}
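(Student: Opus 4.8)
The plan is to reduce everything to the conditional survival function and then lean on the residual-lifetime invariance of the continuous GPD that is recalled in the paragraph just before the statement. First I would fix an integer $m$ and exploit the elementary identity that makes the floor disappear at integer thresholds: since $m$ is an integer, $\lfloor \lambda Y + 1 - h\rfloor \geq m$ holds if and only if $\lambda Y + 1 - h \geq m$, i.e.\ if and only if $Y \geq (m-1+h)/\lambda$ (assuming $\lambda>0$, as the scaling requires). Hence $P(X \geq m) = \bar F_Y\bigl((m-1+h)/\lambda\bigr)$, which is a genuine point on the GPD survival curve precisely when $(m-1+h)/\lambda \geq 0$; the hypothesis $u \geq 1-h$ guarantees this for every $m = u+k$ with $k\geq 0$.

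Next, for an integer $k \geq 0$ I would write
$$
P(X-u \geq k \mid X \geq u) = \frac{P(X \geq u+k)}{P(X\geq u)} = \frac{\bar F_Y\bigl(a + k/\lambda\bigr)}{\bar F_Y(a)}, \qquad a = \frac{u+h-1}{\lambda} \geq 0,
$$
and then apply the GPD invariance property, namely $\bar F_Y(a+t)/\bar F_Y(a) = \bar F_{\text{GPD}}(t;\sigma+\xi a,\xi)$. Evaluating at $t = k/\lambda$ and simplifying the scale via $\xi(k/\lambda)/(\sigma+\xi a) = \xi k/(\lambda\sigma + \xi(u+h-1))$ yields $P(X-u\geq k \mid X\geq u) = \bar F_{\text{GPD}}\bigl(k;\lambda\sigma+\xi(u+h-1),\xi\bigr)$. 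The same algebra covers $\xi=0$ through the stated $e^x$ convention, where memorylessness collapses the scale to $\lambda\sigma$.

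Finally, differencing consecutive values of this conditional survival function gives, for every $k\geq 0$,
$$
P(X-u = k \mid X\geq u) = \bar F_{\text{GPD}}(k;\tilde\sigma,\xi) - \bar F_{\text{GPD}}(k+1;\tilde\sigma,\xi) = p_{\text{D-GPD}}(k;\tilde\sigma,\xi), \qquad \tilde\sigma = \lambda\sigma + \xi(u+h-1),
$$
which is exactly the D-GPD with shape $\xi$ and scale $\tilde\sigma$ by definition \eqref{e:p.d.gpd}. The only genuinely delicate point, and the step I would handle most carefully, is the integer-threshold identity of the first paragraph: it is what removes the floor cleanly, and it is the reason the result needs $u$ (and hence $u+k$) to be an integer and needs $u \geq 1-h$ to keep the argument of $\bar F_Y$ nonnegative. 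Everything after that is the continuous GPD invariance together with bookkeeping of the scale parameter.
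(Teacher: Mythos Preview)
The paper does not actually supply a proof of this proposition; it simply states the result after remarking that ``it is easily checked'' that the D-GPD inherits an invariance property analogous to that of the GPD. Your argument fills this gap correctly and is precisely the computation the authors are alluding to: the floor-at-integer identity $\lfloor z\rfloor\geq m\iff z\geq m$ (for integer $m$) reduces the problem to a ratio of GPD survival values, and the continuous residual-lifetime invariance together with the rescaling $\bar F_{\text{GPD}}(k/\lambda;\sigma+\xi a,\xi)=\bar F_{\text{GPD}}(k;\lambda\sigma+\xi\lambda a,\xi)$ finishes it. Your handling of the hypothesis $u\geq 1-h$ (ensuring $a=(u+h-1)/\lambda\geq 0$ so that the conditioning event has positive probability and the invariance applies) is exactly right. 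One cosmetic point: the proposition as printed swaps the words ``shape'' and ``scale'' in its conclusion; your final line has them the correct way round.
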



\section{Empirical Results}

We now assess the performance of the generalized Pareto distribution (GPD), the discrete generalized Pareto distribution (D-GPD) and the generalized Zipf distribution (GZD) approximations for estimating extreme quantiles from several simulated and real data sets, starting with a simple example to illustrate the methods.

\begin{figure}[h]
    \centering
    \begin{subfigure}{0.49\textwidth}
        \includegraphics[width=\textwidth]{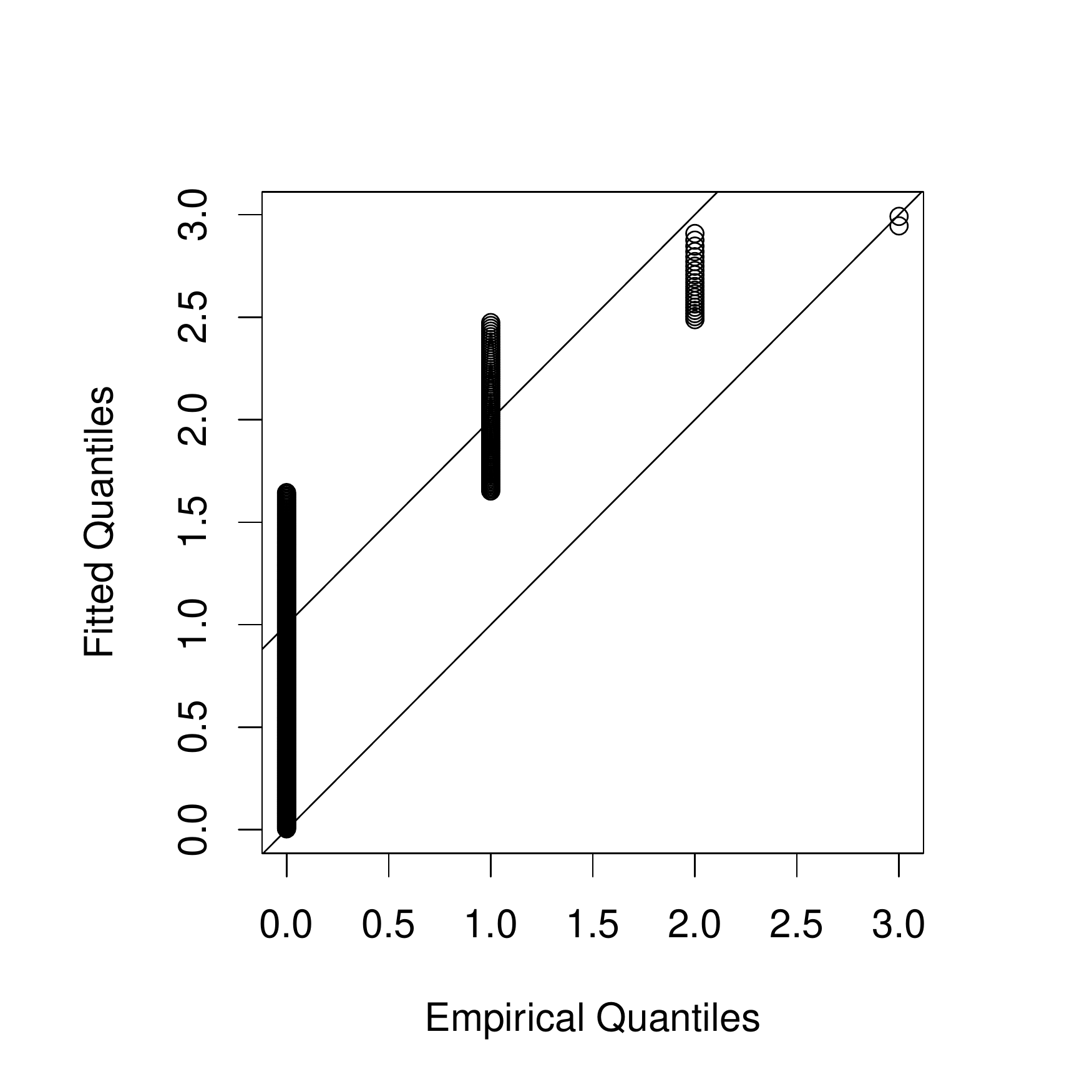}
    \end{subfigure}
        \begin{subfigure}{0.49\textwidth}
        \includegraphics[width=\textwidth]{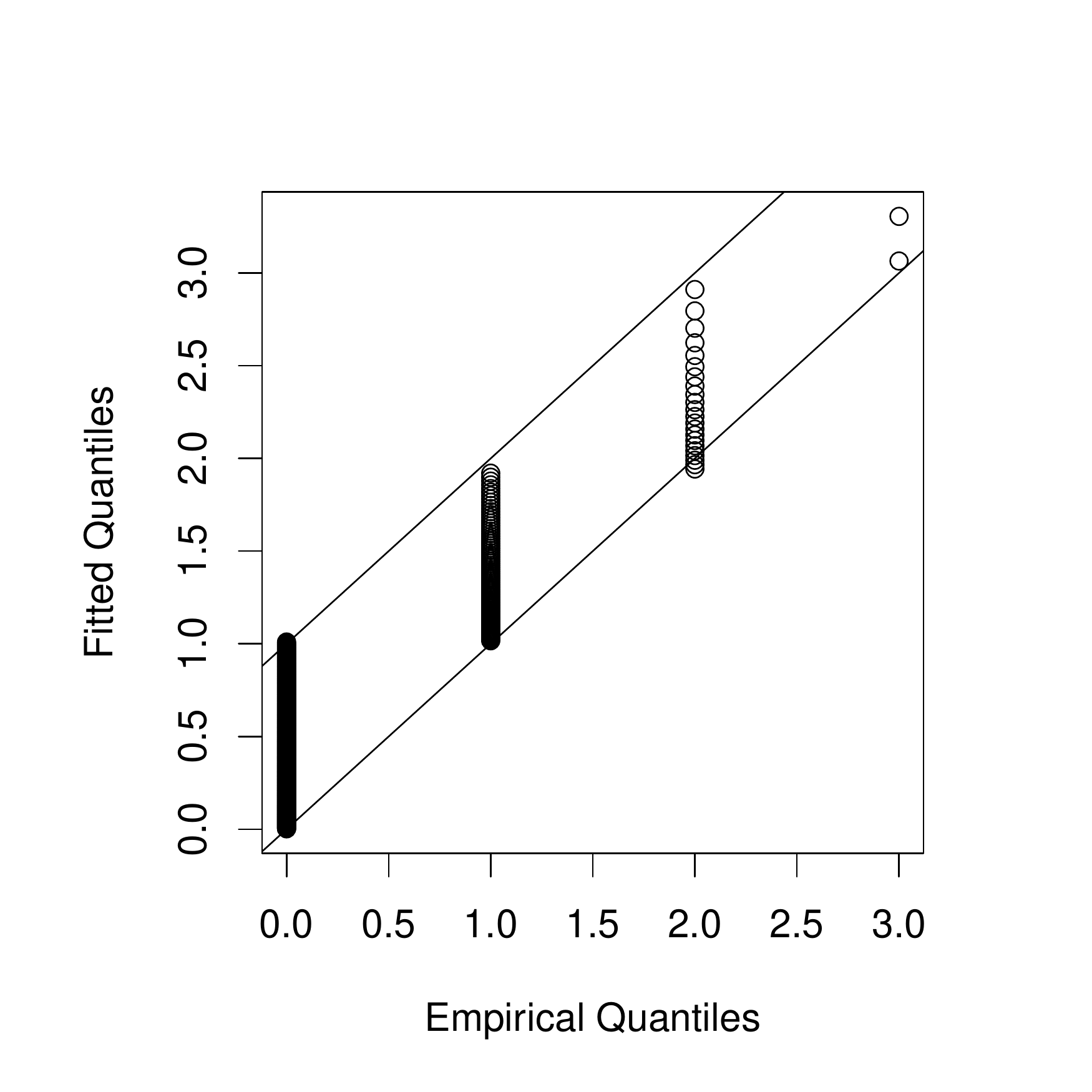}
    \end{subfigure}
            \caption{Quantile-quantile plots for the fit of a GPD (left) and D-GPD (right) to $X-3\mid X\geq 3$, where $X$ is Poisson distributed with rate $1$ and the sample size is $5000.$ A good fit occurs on the right because the points are contained between the two lines, but not on the left. 
    }\label{fig:qqplotPoisson}
\end{figure}

\subsection{Simulated Data}

Let $X$ be Poisson distributed with rate $\lambda=1$ and consider an i.i.d.\ sample of size $n=5000.$ We are interested in inferring the distribution of the exceedances $X-u\mid X\geq u$ above a large threshold $u$, say $u=3$ which is the $95$th empirical percentile of the data in this case. Since $X\in \text{D-MDA}_0,$ we can approximate the distribution of $X-u\mid X\geq u$ by a D-GPD as explained in Section~\ref{par:intro}. Moreover, $p_X,$ the probability mass function of $X,$ also belongs to $\text{D-MDA}_0$ and satisfies the additional assumption in Theorem \ref{thm:implications2} as shown in Example~\ref{ex:dmda}, motivating the D-GPD and GZD approximations. However, $X$ is not in $\text{MDA}$ and thus the GPD approximation does not necessarily apply. In order to compare these three approximations, we fit a GPD, D-GPD and GZD to the observations above $u=3,$ estimating their two parameters $\sigma$ and $\xi$ by maximum likelihood. Figure~\ref{fig:qqplotPoisson} compares the fitted quantiles of the GPD (left) and D-GPD (right) relative to the empirical quantiles. Since the data are discrete, we applied a slightly different graphical method than the standard quantile-quantile (QQ) plots: $F_{\text{GPD}}^{-1}\{i/(n+1);\sigma,\xi\},$ for $i=1,\ldots,n,$ were plotted against empirical quantiles, where $(\sigma, \xi)$ are the estimated parameters. A good fit occurs when the points accumulate between the two diagonal lines in Figure~\ref{fig:qqplotPoisson} and touch the bottom line, as would be the case if the quantiles of a continuous random variable $Y$ were plotted against those of $\lfloor Y\rfloor.$ One clearly sees that the D-GPD fits well the observations compared to the GPD which delivers a poor fit. We mention that the D-GPD and GZD approximations produce very similar estimates in this case; the QQ-plot of the GZD (not displayed here) looks visually identical to the one of the D-GPD. The estimated scale parameters $\sigma$ for the GPD, D-GPD and GZD are $1.91,$ $0.71$ and $0.69$ respectively. Increasing $\lambda$ would increase these estimates and render the three methods indistinguishable as expected from Proposition~\ref{pro:convRatio}.


We now compare the performances of the GPD, D-GPD and GZD approximations in estimating the probability of a rare event in the following simulated case. Let 
\begin{align}\label{eq:defXY}
Y\sim \text{IG}(2,1), \quad X=\lfloor Y\rfloor,\end{align}
where $\text{IG}(\alpha,\beta)$ denotes an inverse-gamma distribution with probability density function $f(x)=\Gamma(\alpha)^{-1} \beta^\alpha x^{-\alpha-1} \exp(-\beta/x),$ $x>0$. We repeat $500$ times the experiment described below. An i.i.d.\ sample of size $8000$ is drawn from the distribution of $X$. From these observations, the goal is to estimate the probability of the extreme region
\begin{align}\label{eq:defpe}
p_e=P(X\geq \lfloor q_e\rfloor),
\end{align}
where $q_e$ is the $99.99$ percentile of $Y,$ i.e., the value exceeded once every $10\, 000$ times on average. The strategy pursued is to select an integer threshold $u$ as the $95$th empirical percentile of the sample, fit a parametric distribution to the exceedances $X-u\mid X\geq u,$ and use it to extrapolate the tail and estimate $p_e$. It holds $X\in \text{D-MDA}_{1/2}$ and $X\in \text{MDA}_{1/2},$ which motivates the choice of a GPD and D-GPD as seen in Section~\ref{par:intro}. We implement two variants of the GPD approximation: the first has no continuity correction and the second shifts the observations by $-\delta=-\frac{1}{2}.$ It is not immediate if the probability mass function of $X$ is in D-MDA and we thus apply the GZD approximation heuristically. 

As a benchmark, we will also estimate $p_e$ from a sample of the continuous variable $Y$ (as opposed to its discretization $X$). In this context, the GPD approximation is motivated by the fact that $Y\in \text{MDA}_{1/2},$ and we thus fit a GPD to $Y-u\mid Y\geq u.$

\begin{figure}    
\centering    
     \begin{subfigure}[b]{0.495\textwidth}
        \includegraphics[width=\textwidth,trim=0.8cm 0.5cm 0.8cm 0.5cm]{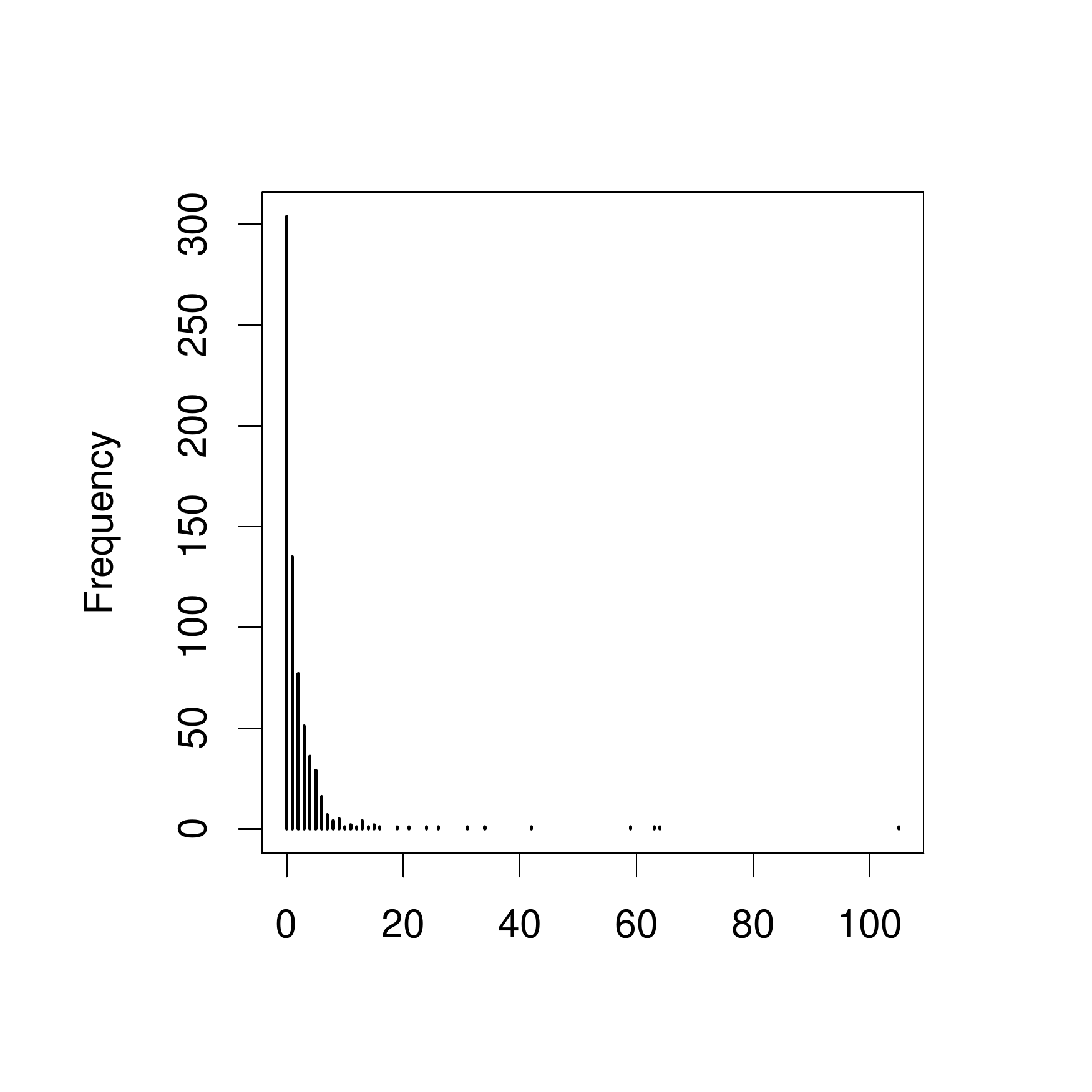}
    \end{subfigure}
      \begin{subfigure}[b]{0.495\textwidth} 
        \includegraphics[width=\textwidth,trim={0.8cm 0.5cm 0.8cm 0.5cm}]{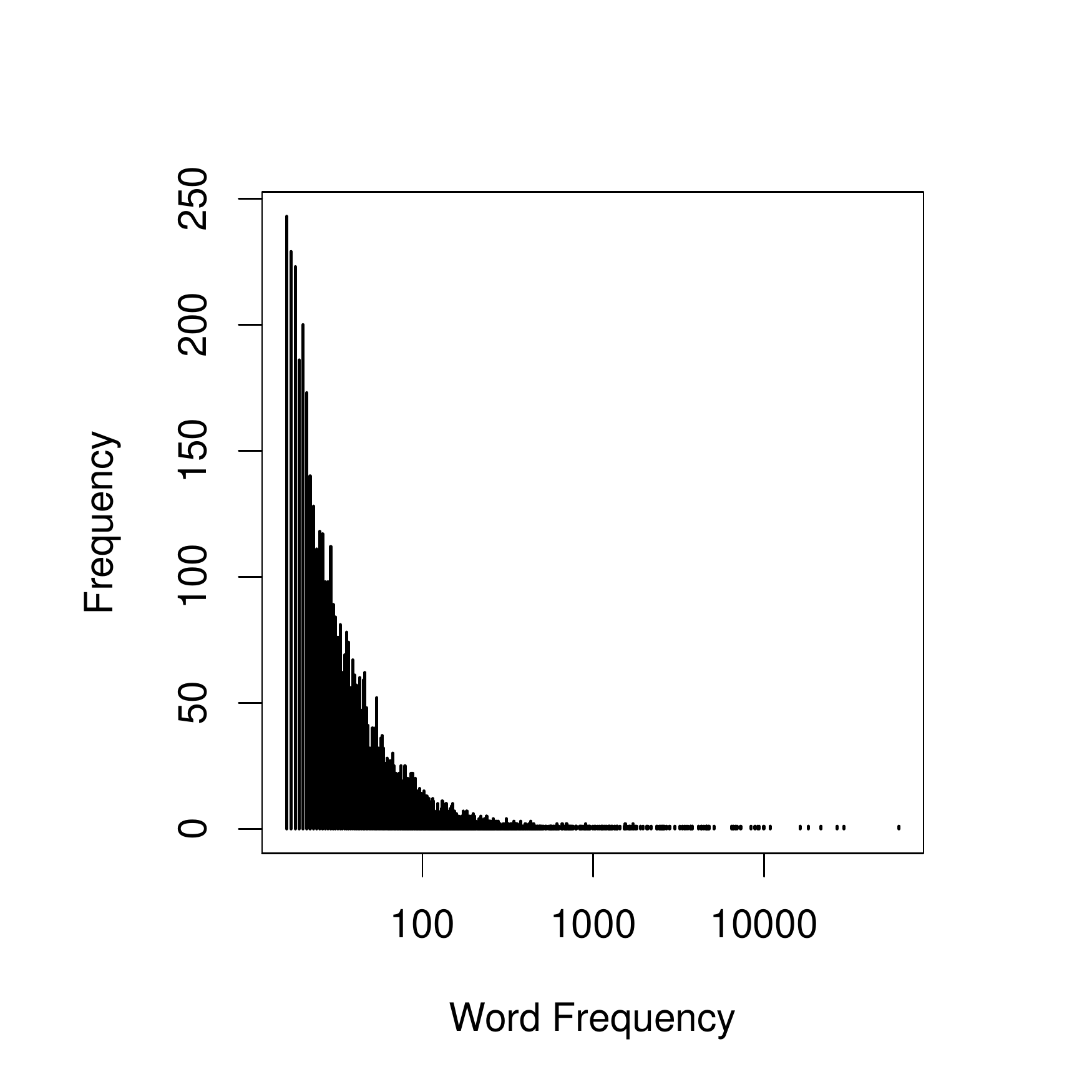}
    \end{subfigure}
    \caption{On the left: frequency plot of a sample of $X-2\mid X\geq 2$ of size $701$ for $X$ defined in (\ref{eq:defXY}). On the right: frequency plot of the $5588$ most frequent words in a British corpus ($x$ axis on log-scale).
    }
         \label{fig:tablePlotSim}
\end{figure}

\begin{table}
\small
\center
\begin{tabular}{l|lllll}
& $ p_e \times 10^ 3 $&  &  $ \xi $  & $ \sigma$   \\ 
\hline\hline
&  mean (cov, len)   & true length    & mean (cov, len) &  mean (len) \\ 
 truth  & $ \textbf{0.10}  $ & &  $0.5$ &  \\ 
 \hline $Y-u\mid Y\geq u$  &  & $ $ & \\  
 GPD & $ 0.10 \, ( 87 \%, \,  0.16 ) $ & $ 0.16 $ & $ 0.49 \, ( 95 \%, \,  0.22 ) $ & $ 1.19 \, ( 0.30 ) $ \\ 
 \hline  $X-u\mid X\geq u$ &  & $ $ & \\  
 D-GPD & $ \textbf{0.10} \, ( 86 \%, \,  0.17 ) $ & $ 0.16 $ & $ 0.49 \, ( 95 \%, \,  0.23 ) $ & $ 1.19 \, ( 0.33 ) $ \\ 
 GZD & $ \textbf{0.11} \, ( 88 \%, \,  0.17 ) $ & $ 0.17 $ & $ 0.50 \, ( 95 \%, \,  0.24 ) $ & $ 1.39 \, ( 0.29 ) $ \\ 
 $ \text{GPD}_{\delta=\frac{1}{2}} $ & $ 0.04 \, ( 20 \%, \,  0.07 ) $ & $ 0.08 $ & $ 0.37 \, ( 22 \%, \,  0.18 ) $ & $ 1.43 \, ( 0.32 ) $ \\ 
 $\text{GPD}_{\delta=0}$ & $ 7.93 \, ( 83 \%, \,  23.97 ) $ & $ 11.25 $ & $ 8.27 \, ( 0 \%, \,  1.24 ) $ & $ 0.00 \, ( 0.00 ) $ \\
 \end{tabular}
\caption{Performance of several methods in estimating the probability $p_e$ of a rare event defined in (\ref{eq:defpe}). A generalized Pareto distribution (GPD) is fitted to the observations of $Y$ exceeding $u=2,$ and the following distributions are fitted to exceedances of $X=\lfloor Y\rfloor$: a discrete generalized Pareto distribution (D-GPD), a generalized Zipf distribution (GZD) and a GPD with continuity correction $\delta.$ The table displays average maximum likelihood estimators for $p_e,$ $\xi$ and $\sigma$ accross $500$ experiments. Coverage (cov) and average length (len) of $90\%$ confidence intervals are shown between brackets. In each experiment, about $700$ observations exceeded the threshold.}
  \label{tab:exp1Init}
 \end{table}

A frequency plot of a sample of $X-u\mid X\geq u$ is displayed in Figure~\ref{fig:tablePlotSim} on the left-hand side. For each model, we compute maximum likelihood estimators for $\sigma$ and $\xi$ by performing a two dimensional maximization using the function \texttt{optim} of R (\cite{R2015}) with starting values $(1,1).$ We then compute $p_e$ and approximate $90\%$ confidence intervals under asymptotic normality of the estimators. Table~\ref{tab:exp1Init} displays: the average parameters $p_e,$ $\xi$ and $\sigma$ over the $500$ experiments, the average length of the confidence intervals, the true length and their coverage. True length is the length the intervals should have had to contain the estimates across the $500$ experiments $90\%$ of the time. Coverage indicates the proportion of time the truth lies in the confidence interval. 

It appears that the D-GPD and GZD approximations accurately estimate $p_e$ from the discretized data with a coverage close to the correct one of $90\%,$ and that their performance is good relative to the situation of full information where the continuous data are available. On the other hand, the $\text{GPD}$ approximation with $\delta=\frac{1}{2}$ is inaccurate and the GPD with $\delta=0$ performs very poorly (misleadingly, the latter has a larger likelihood at the maximum likelihood estimate than the former). This illustrates why the D-GPD or GZD approximations should be preferred to the GPD approximation in situations similar to this simulated case. 
 
We know that the D-GPD and GZD coincide with a geometric distribution when $\xi=0$ and that they are asymptotically equivalent when $\xi>0$ and $\sigma\rightarrow \infty$ (Proposition~\ref{pro:convRatio}). Although $\sigma$ is not particularly large in the above simulated case, the D-GPD and GZD approximations deliver very similar results. We point out that they outperform the Poisson and negative binomial distributions which would estimate $p_e$ very poorly. 

We remark few differences between the GPD fitted to exceedances of $Y$ and the D-GPD fitted to exceedances of its discretization $X$, except that the former method yields slightly shorter confidence intervals. As a consequence of the invariance property in Proposition~\ref{pro:invariance}, the two methods would still give similar estimates of $\xi$ if $X$ would be defined by other forms of rounding than $X=\lfloor Y\rfloor,$ such as using rounding or ceiling function.

In conclusion, we showed that the D-GPD and GZD approximations were able to estimate the probability of rare events in this example. These findings are supported by two complementary simulated cases with $Y\in \text{MDA}_\xi$ for $\xi=0$ and $\xi<0$ (\cite{thesisHitz2017}, Chapter~2). The D-GPD provides a more efficient method because its probability mass, survival and quantile functions are closed-form, but both distributions are useful to describe extreme values of discrete random variables, which we will further illustrate on real data sets.

\subsection{Word Frequencies and Word Lengths}\label{subsec:wf}

\begin{figure}
\center
    \begin{subfigure}[b]{0.45\textwidth}
        \includegraphics[width=\textwidth, trim= 1cm 0.5cm 1.5cm 1.5cm]{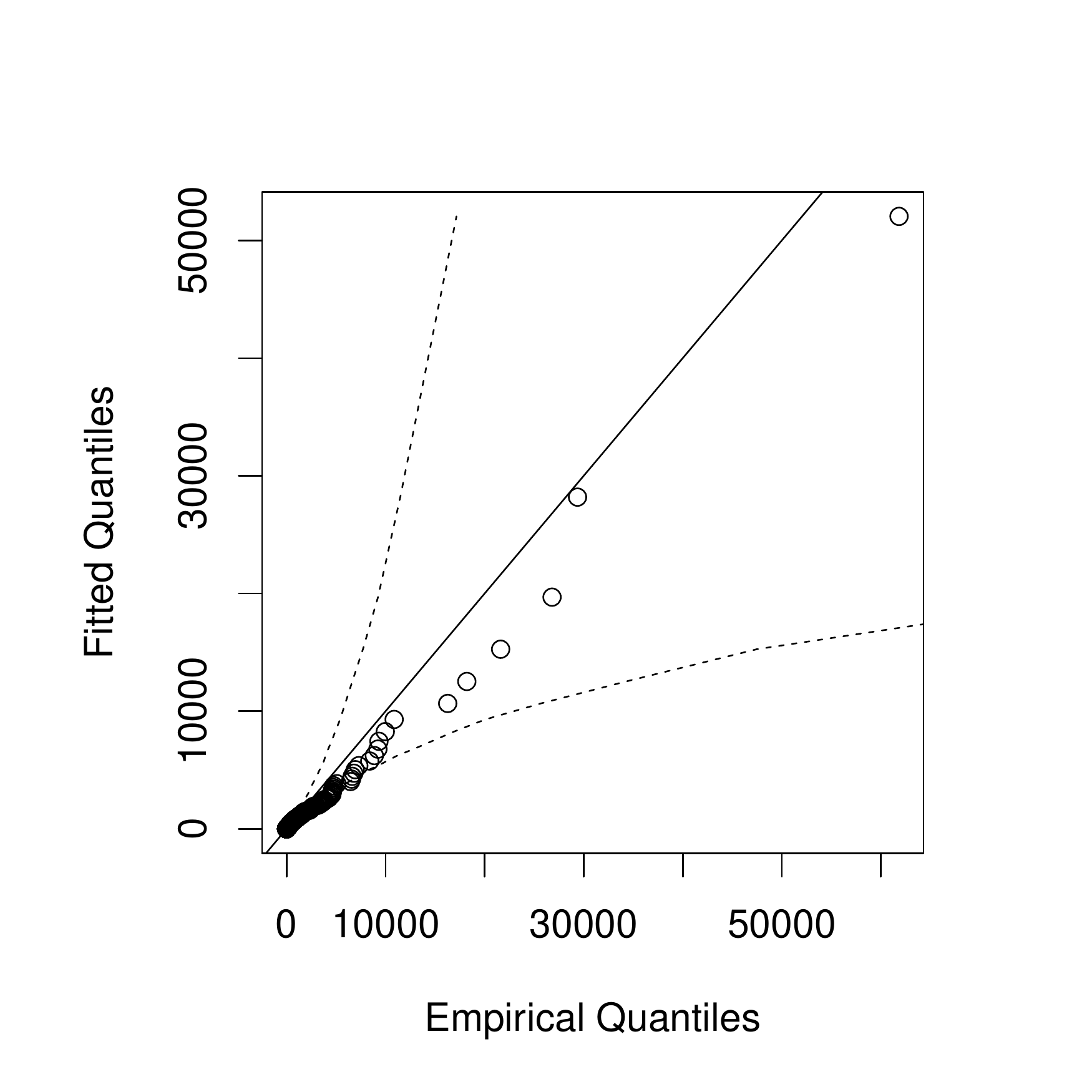}
    \end{subfigure}
     \begin{subfigure}[b]{0.45\textwidth}
        \includegraphics[width=\textwidth, trim= 1.5cm 0.5cm 1cm 1.5cm]{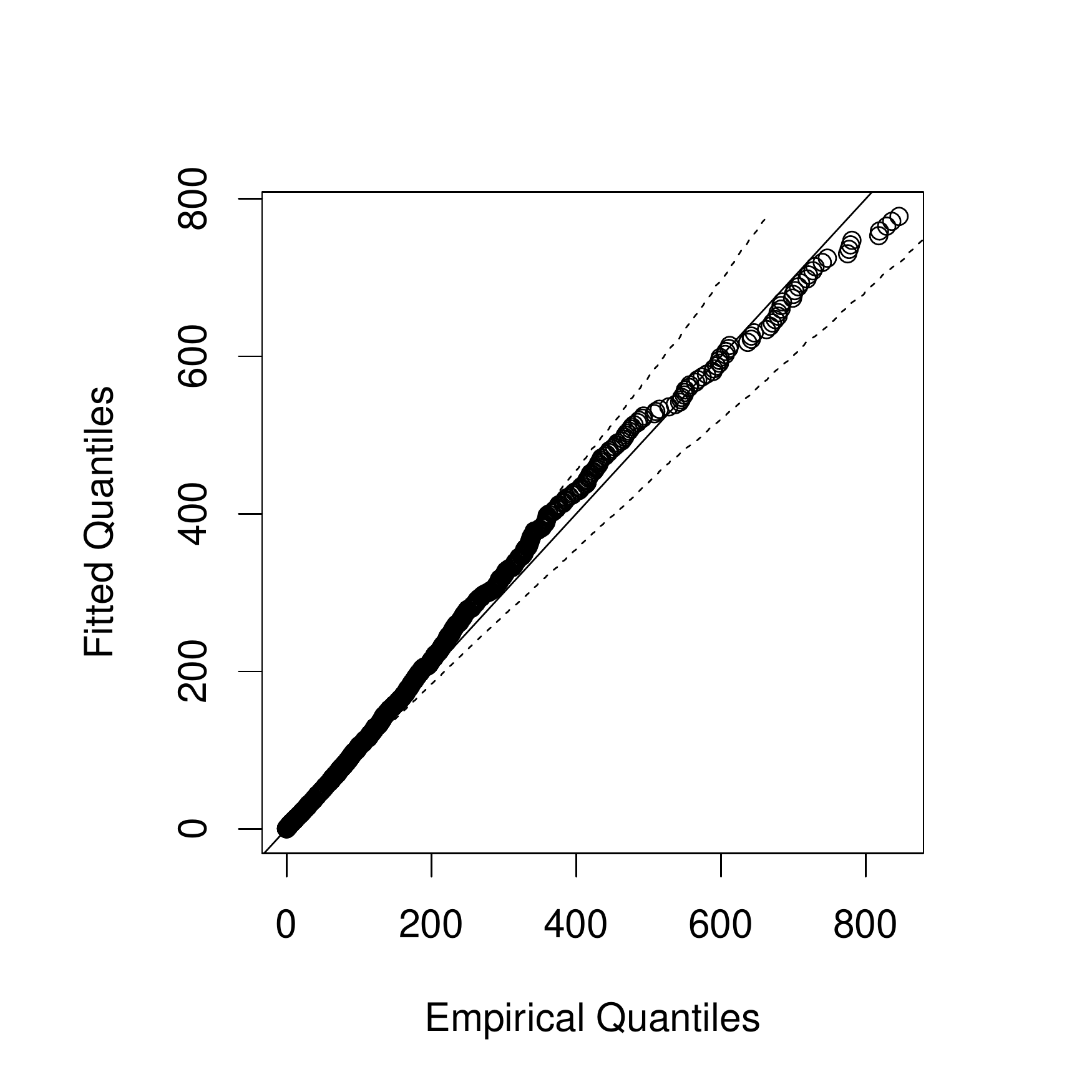}
    \end{subfigure}
       \caption{QQ-plots for a D-GPD fitted to the frequencies of the $5588$ most frequent words in a British corpus. On the right, only percentiles below $99\%$ are plotted. Dashed lines denote pointwise $90\%$ confidence intervals.
  }
  \label{fig:qqBritish}
        \end{figure}

    \begin{figure}
\center
\begin{subfigure}[b]{0.45\textwidth}
        \includegraphics[width=\textwidth, trim= 1.5cm 0.5cm 1cm 1.5cm]{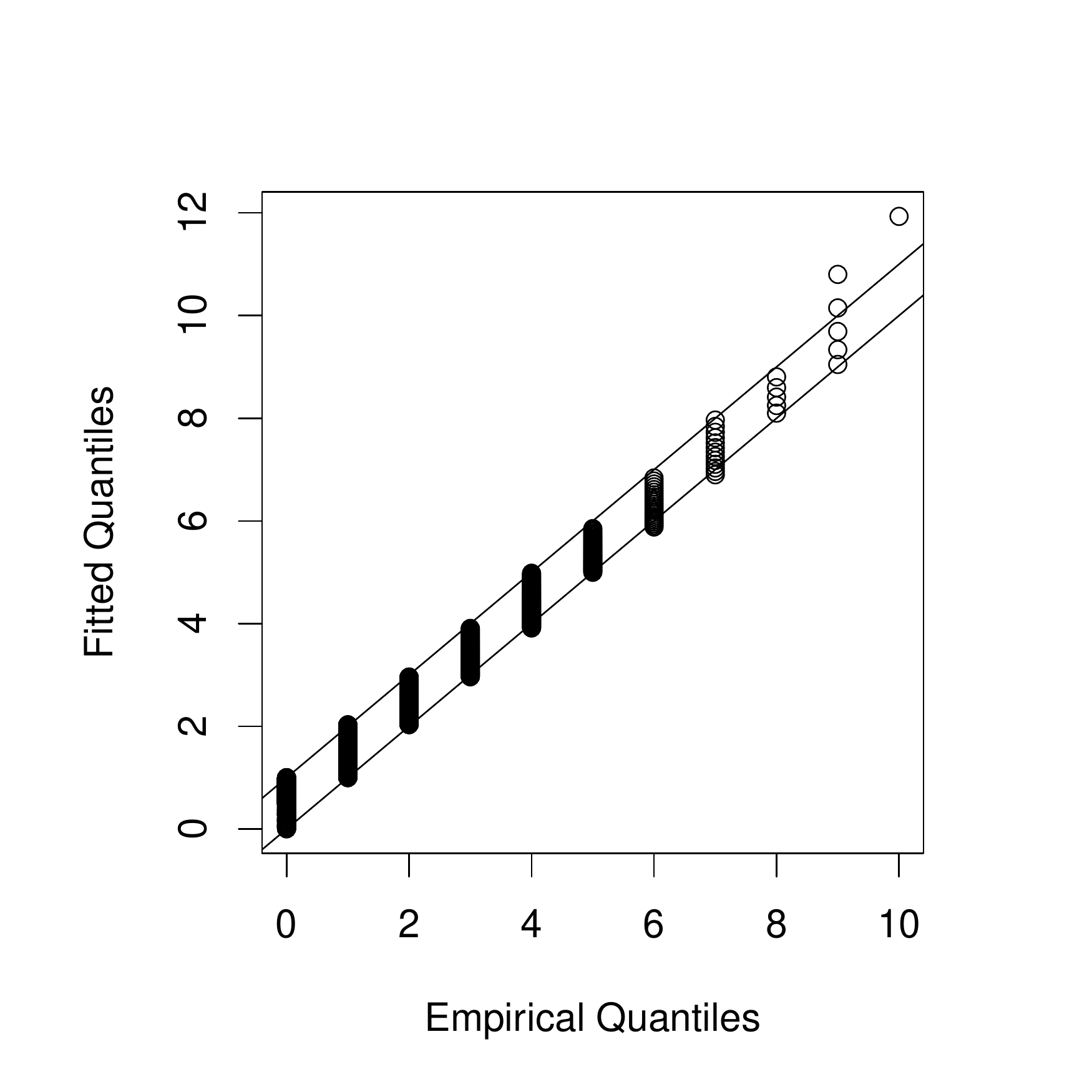}
    \end{subfigure}
\begin{subfigure}[b]{0.45\textwidth}
        \includegraphics[width=\textwidth, trim= 1.5cm 0.5cm 1cm 1.5cm]{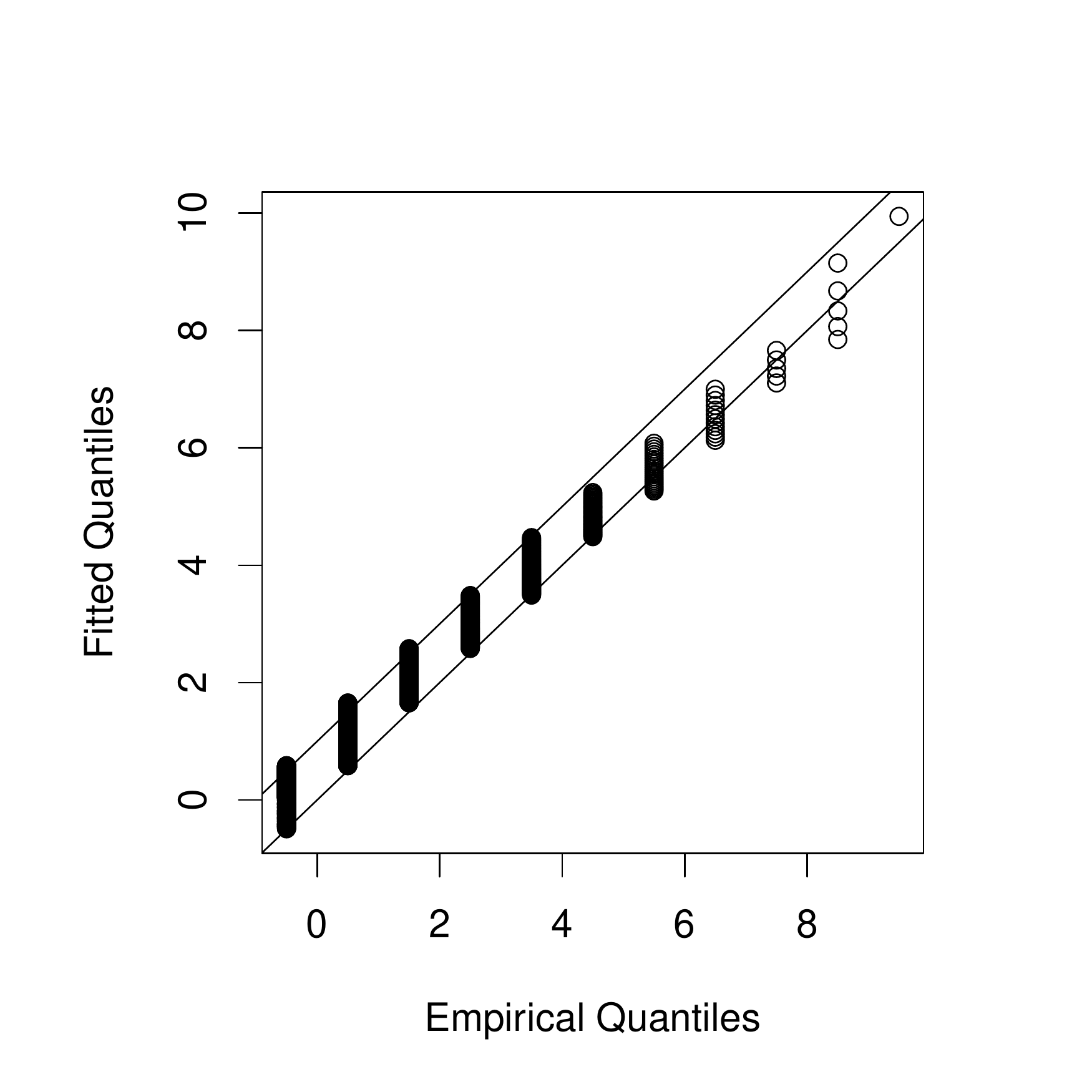}
    \end{subfigure}                
     \caption{QQ-plots for a D-GPD (left) and GPD with $\delta=\frac{1}{2}$ (right) fitted to the length of the $2875$ French words consisting of $15$ letters or more. A good fit occurs when the lowest part of each accumulation of points is close to the line.}
  \label{fig:qq15French}
\end{figure}

We consider a data set based on the British National corpus counting written and spoken English words (\cite{British2007}, [2007]) and are interested in modeling the frequency of the most popular words. The six most frequent words in the corpus are: ``the", ``of", ``and", ``a", ``in" and ``to." Let $X$ be the frequency at which a word occurs. We select a large threshold $u$ and fit a GPD, a D-GPD and a GZD to $X-u\mid X\geq u$ by maximum likelihood. The GPD is implemented with continuity correction ($\delta=\frac{1}{2}$) and without ($\delta=0$). Selecting an appropriate threshold is crucial when estimating high quantiles and can be based on techniques such as mean residual plots (see e.g.\ \cite{Davison1990}). As our focus here is rather on describing the tail distribution of these word frequencies, we choose a relatively low threshold exceeded by $5588$ observations. A frequency plot of $X\mid X\geq u$ is displayed on the right-hand side in Figure~\ref{fig:tablePlotSim}. 

$ $

\begin{table}[h]
\small
\begin{center}
\begin{tabular}{l|l*{5}{c}r}
             Model & p-value &  NLL & $ \xi$ & $ \sigma $    \\ 
\hline
\hline
Word Frequency & & & & &\\
\hline
 British & & & & \\ 
  D-GPD & $ \textbf{0.56} $ & $ 27896.6 $ & $0.88_{[0.84,0.93]}$ & $22.38_{[21.42,23.34]}$ \\ 
 GZD & $ \textbf{0.56} $ & $ 27896.6 $ & $0.88_{[0.84,0.93]}$ & $22.82_{[21.87,23.77]}$ \\ 
 $ \text{GPD}_{\delta=\frac{1}{2}} $ & $ \textbf{0.55} $ & $  $ & $0.88_{[0.84,0.93]}$ & $22.39_{[21.43,23.34]}$ \\ 
 $\text{GPD}_{\delta=0}$ & $ 0.00 $ & $  $ & $0.93_{[0.89,0.97]}$ & $20.89_{[19.96,21.81]}$ \\ 
  Negative binomial & $ 0.00 $ & $ 29663.2 $  \\ 
\hline
\hline
Word length & & & & &\\
\hline
  French & & & & \\ 
 D-GPD & $\textbf{0.85}$ & $ 3894.0 $ & $0.02_{[-0.01,0.06]}$ & $1.36_{[1.30,1.43]}$ \\ 
 GZD & $\textbf{0.84} $ & $ 3894.0 $ & $0.02_{[-0.01,0.06]}$ & $1.37_{[1.32,1.43]}$ \\ 
 $ \text{GPD}_{\delta=\frac{1}{2}} $ & $ 0.00 $ & $  $ & $-0.04_{[-0.06,-0.01]}$ & $1.51_{[1.45,1.57]}$ \\ 
  Negative binomial & $0.75$ & \textbf{$3893.9$} & & &\\
\end{tabular}
\caption{
Fit of a GPD, D-GPD and GZD to frequencies of the most frequent words in a British corpus and length of the longest French words. The table displays the p-value of discrete Kolmogorov--Smirnov tests, negative log-likelihood (NLL) and maximum likelihood estimators with $90\%$ confidence intervals. The sample sizes are $5588$ for word frequencies and $2875$ for word lengths. 
}
\label{tab:wfMain}
\end{center}
\end{table}

The D-GPD delivers a good fit as revealed by the QQ-plot in Figure~\ref{fig:qqBritish}; indeed, most observations lie within the pointwise $90\%$ confidence intervals obtained by simulating $2000$ times from the fitted model and computing empirical quantiles at each simulation. Table~\ref{tab:wfMain} presents maximum likelihood estimators for $\sigma$ and $\xi$ with $90\%$ confidence intervals. Apart from the case of the GPD with $\delta=0,$ all models provide similar results as expected from Proposition~\ref{pro:convRatio} when $\sigma$ is large. A Kolmogorov--Smirnov test for discrete data (\cite{Arnold2011}) leads to the same conclusion (in order to perform the test for the GPD, which is a continuous distribution, we assumed that data were rounded realizations of the fitted model). The analysis of frequencies of French words in a collection of books and movie subtitles leads to analogous results (\cite{thesisHitz2017}, Chapter 2).  Since the GZD coincides with a Zipf--Mandelbrot distribution when $\xi>0,$ the above results are consistent with a common hypothesis in linguistic that word frequencies follow a Zipf-type law (see e.g.\ \cite{Booth1967}). 


We now consider the set of $150\,000$ words in the French lexical (\cite{Francais2004}) and denote by $X$ the length of a word. The longest French word, for instance, is ``anticonstitutionnellement", consisting of $25$ letters. Contrary to the word frequencies, this data set contains many tied observations and we want to see if this translates into a marked difference between the methods. We thus fit the models to $X-u\mid X\geq u$ with $u=15$ (the $98\%$ empirical percentile of the data), leaving $2875$ exceedances. The D-GPD and GZD deliver a good fit and similar estimations between each other, and this time they clearly outperform the GPD approximation with $\delta=\frac{1}{2}$ as shown by QQ-plots in Figure~\ref{fig:qq15French} and discrete Kolmogorov--Smirnov tests in Table~\ref{tab:wfMain} (p-values are here computed by Monte Carlo simulation). Notice that the negative binomial also fits well the observations in this case. 

To summarize, we have illustrated the adequacy of the D-GPD and GZD in describing the frequencies of the most common and longest words from large corpora. This data analysis supports the conclusion drawn earlier from the simulated case: the D-GPD and GZD are preferred over the GPD to model extremes of discrete data when tied observations are frequent.

\subsection{Tornadoes}

Accurately assessing the risk of environmental hazards is crucial for insurance companies in particular, and we illustrate here how the D-GPD and GZD approximations can be useful techniques for this purpose.

We consider the data set studied in \cite{Tippett2016} which reports the number of extreme tornadoes per outbreak in the United States between 1965 and 2015. They defined an ``extreme outbreak'' as a sequence of twelve or more tornadoes occurring close to one another in time and that are rated F1 and greater on the Fujita scale, or EF1 and greater on the enhanced Fujita scale \citep{fuhrmann2014ranking}. 

Let $X$ be the number of such tornadoes per extreme outbreak. The authors found that observations from $X-u\mid X\geq u$ for $u=12$  were well modeled by a GPD with linear temporal trend in the scale parameter and continuity correction $\delta=\frac{1}{2}.$ The GPD, however, is not a discrete distribution and the D-GPD and GZD seem appropriate choices for this type of data. We fit these three distributions and find that they all achieve comparable quality of fit as shown in Table~\ref{tab:torn}; linear trend in $\sigma$ leads to significative improvements in each case. 
 

\begin{table}[h]
\small
\begin{center}
\begin{tabular}{l|llll}
Model &  NLL & $ \xi$ & $ \sigma_0 $ & $ \sigma_t $    \\ 
\hline
 $\text{D-GPD}$ & $ 1439.92 $ & $0.27_{[0.16,0.37]}$ & $4.81_{[3.64,5.99]}$ & $6.11_{[3.74,8.48]}$ \\  
 $ \text{GPD}_{\delta=\frac{1}{2}} $  & $ 1439.93 $ & $0.26_{[0.16,0.37]}$ & $4.86_{[3.68,6.04]}$ & $6.13_{[3.75,8.50]}$ \\ 
\end{tabular}

\caption{Fit of a D-GPD and GPD (with linear trend in the scale parameter) to the number of tornadoes per extreme tornado outbreak in the United States.  Only the $435$ outbreaks with more than $12$ tornadoes were considered. The table displays negative log-likelihood (of the discretized model in the case of the GPD) and maximum likelihood estimates with $90\%$ confidence intervals. 
}
\label{tab:torn}
\end{center}
\end{table}

\subsection{Multiple Births}
 We now turn our attention to a data set consisting of very small integer values to see if the D-GPD and GZD can describe tail distributions in this special case. We examine data counting multiple births in the United States from $1995$ to $2014$ (\cite{USMultipleBirth2015}); its frequency table reads

$ $
\begin{center}
\small 
\begin{tabular}{lllll}
            single &  twin & triplet & quadruplet  & quint.\ or more  \\
       \hline
       78\,178\,588     &     2\,500\,340        &   117\,603    &         8\,108       &      1\,353. \\
\end{tabular}
\end{center}
$ $

Let $X$ be the number of children at birth. We fit a right-censored D-GPD, GZD, negative binomial and Poisson distribution to $X^C-u\mid X^C\geq u$ for $u=1$, where $X^C=\min(X,5).$  As shown in the table below, which displays Bayesian information criteria (BIC) and maximum likelihood estimates, the D-GPD and GZD outperform the Poisson and negative binomial distributions, and are useful methods if one must estimate, for example, the probability that an American women delivers sextuplets. 


$ $
\begin{center}
\small
\begin{tabular}{l|llll}
          &    BIC & $\xi$ & $\sigma$  \\
          \hline
D-GPD   & $ \textbf{546\,441.2} $ & $0.06_{[0.06,0.06]}$ & $0.30_{[0.30,0.30]}$  \\ 
GZD & $ \textbf{546\,440.6} $ & $0.06_{[0.06,0.07]}$ & $0.32_{[0.32,0.32]}$  \\ 
Negative binomial  & $ 546\,547 $ &&  \\ 
Poisson  & $ 552\,284 $ & &    \\ 
\end{tabular}
\end{center}
$ $

The applicability of the D-GPD and GZD approximations for estimating tail distributions when the selected threshold $u$ is a small integer should be more rigorously explored. Future work could also assess the validity of the approximations in the case $\xi<0$, and compare them to a broader class of discrete distributions such as discrete compound Poisson distributions. Since the D-GPD and GZD delivered similar performances in the data analysis carried out in this article, it would be interesting to further understand how they relate to one another.


$ $

\paragraph*{Acknowledgements} The first author is grateful to the Berrow Foundation for financial support and would like to thank Robin Evans, Gesine Reinert, David Steinsaltz and Jonathan Tawn for valuable advice and encouragement. This research was partially supported by the ARO grant  W911NF-12-10385.

\small

\bibliographystyle{abbrvnat}
\bibliography{references}

\end{document}